\newtheorem{theorem}{Theorem}[section]
\newtheorem{corollary}[theorem]{Corollary}
\theoremstyle{definition}
\theoremstyle{remark}
\newtheorem{remark}[theorem]{Remark}
\newcommand{\fe}{\mathrm{e}}
\newcommand{\bR}{{\mathbb R}}
\newcommand{\bT}{{\mathbb T}}
\newcommand{\bC}{{\mathbb C}}
\newcommand{\bZ}{{\mathbb Z}}
\newcommand{\p}{\partial}
\newcommand{\be}{\begin{equation}}
\newcommand{\ee}{\end{equation}}
\newcommand{\bes}{\begin{equation*}}
\newcommand{\ees}{\end{equation*}}
\numberwithin{equation}{section}
\begin{document}

\title{Low-regularity integrators for nonlinear Dirac equations}

\author[K. Schratz]{Katharina Schratz}
\address{\hspace*{-12pt}K.~Schratz: Fakult\"{a}t f\"{u}r Mathematik, Karlsruhe Institute of Technology,
Englerstr.2, 76131 Karlsruhe, Germany}
\email{katharina.schratz@kit.edu}

\author[Y. Wang]{Yan Wang}
\address{\hspace*{-12pt}Y.~Wang: School of Mathematics and Statistics, Central China
Normal University, 430079 Wuhan, China}
\email{wang.yan@mail.ccnu.edu.cn}

\author[X. Zhao]{Xiaofei Zhao}
\address{\hspace*{-12pt}X.~Zhao: School of Mathematics and Statistics, Wuhan University, 430072 Wuhan, China}
\email{matzhxf@whu.edu.cn}

\date{}

\dedicatory{}
\begin{abstract}
In this work, we consider the numerical integration of the nonlinear Dirac equation and the Dirac-Poisson system (NDEs) under rough initial data. We propose a ultra low-regularity integrator (ULI) for solving the NDEs which enables optimal first-order time convergence in $H^r$  for solutions in $H^{r}$, i.e., without requiring any additional regularity on the solution. In contrast to classical methods, ULI overcomes the numerical loss of derivatives and is therefore more efficient and accurate for approximating low regular solutions.
Convergence theorems and the extension of ULI to second order are established.  Numerical experiments confirm the theoretical results and underline the favourable error behaviour of the new method at low regularity compared to classical integration schemes.
 \\
{\bf Keywords:} Nonlinear Dirac equation, Dirac-Poisson system, Exponential-type integrator, Low regularity, Optimal convergence,
Splitting schemes. \\ \\
{\bf AMS Subject Classification:} 35Q41, 65M12, 65M70.
\end{abstract}

\maketitle

\section{Introduction}
Numerical integrators for solving the semi-linear dispersive equation
$$\partial_t u=\mathcal{L}u+\mathcal{N}(u),\quad t>0,$$
usually require smoothness of the solution $u$, i.e., the boundedness of $\mathcal{L}^p u$ in some Sobolev space, where $\mathcal{L}$ denotes  a  skew-adjoint linear differential operator and $\mathcal{N}(u)$ denotes some nonlinear function \cite{Tao}. In particular, traditional methods, such as explicit and implicit Runge-Kutta methods, splitting schemes and exponential integrators \cite{Hochbruck}, can only reach their optimal convergence rate $O((\Delta t)^m)$ in $H^r$ ($r> \frac{1}{2}$) for solutions in $H^{r+\delta}$ for some $\delta = \delta(m, \mathcal{L})>0$ depending on the order of spatial differentiation in $\mathcal{L}$ and $m$.
 This additional regularity requirement on the solution (i.e., $\delta>0$) is indeed introduced by the numerical approximation and necessary for (optimal) convergence. Recently,  for certain problems a new class of integrators could be constructed which allow optimal convergence without any loss of numerical derivatives in the time discretization (i.e., $\delta = 0$), see for instance \cite{lownls} for the one-dimensional quadratic nonlinear Schr\"{o}dinger equation.
In this work, we identify the nonlinear Dirac equations, i.e., a class of nonlinear dispersive equations with cubic nonlinearities, for which such low-regularity integrators can be designed.
As important models in particle physics and relativistic quantum mechanics, the Dirac-type equations have been widely considered in studies of two-dimensional materials \cite{FDDirac,2dmaterial}, quantum field theory \cite{Thirring,QFT} and Bose-Einstein condensates \cite{BEC}. We shall consider in this paper the following one-dimensional nonlinear Dirac equation (NDE) \cite{DiracYan,DiracZhao} as the model problem:
\begin{subequations}\label{dirac}
\begin{align}
&i\partial_{t}\Phi=-i\alpha\partial_x\Phi+\beta\Phi +V\Phi+ F(\Phi),\quad t>0,\ x\in\bR,\\
&\Phi(0,x)=\Phi_0(x),\quad x\in\bR, \label{dirac 2}
\end{align}
\end{subequations}
where $\Phi=\Phi(t,x)=(\phi_1(t,x), \phi_2(t,x))^T:[0,\infty)\times\bR\to\bC^2$ is the unknown spinorfield, $\Phi_0(x):\bR\to\bC^2$ is the initial data, $V$ denotes the electric potential which is either given as an external real-valued function $V=V_e(x)$ or a time-dependent function $V=V(t,x)$ determined by a self-consistent Poisson equation \cite{FDDirac}
$$-\partial_{xx} V=|\Phi|^2,\quad t\geq0,\ x\in\bR\quad\mbox{with}\quad \int_{\bR}Vdx\equiv0,
\quad t\geq0,$$
$F(\Phi) = \lambda (\Phi^*\beta\Phi)\beta\Phi=\lambda(|\phi_1|^2-|\phi_2|^2)\beta\Phi$ denotes the cubic Thirring-type nonlinearity with a given parameter $\lambda\in\bR$ denoting the strength of the nonlinear interaction \cite{Thirring}, where $\Phi^* = \bar{\Phi}^T$ denotes the complex conjugate transpose of $\Phi$, and $$
\alpha=
\begin{pmatrix}
0 & 1 \\
1&0
\end{pmatrix},\quad
\beta=
\begin{pmatrix}
1 & 0 \\
0&-1
\end{pmatrix},$$
are the Pauli matrices.

Equation (\ref{dirac}) occurs as a core part of many related models, such as the Gross-Neveu system \cite{lowDKG5}, the Dirac-Klein-Gordon system \cite{lowDKG2,KGD}, the Maxwell-Dirac system \cite{lowDKG3,DiracMaxwell} and the Chern-Simons-Dirac system \cite{lowCSD}. Due to these applications, the numerical solutions of the NDE (\ref{dirac}) are widely interested in computational physics \cite{DMBao, FDDirac,SplittingDirac2,DiracMaxwell,DiracZhao,Splitadd1,FDadd1}, especially the bound states, the solitary waves and the interaction dynamics of solitons \cite{1Dapp0, 1Dapp1, Shao}. These existing works in computational physics or applied mathematics mainly assume that the initial data of NDE is smooth enough for theoretical and/or  numerical studies. While in general physical situations or real applications with unavoidable noise, the initial input in (\ref{dirac 2}) for the NDE may not be a smooth function or a function with high regularity. For such initial data, mathematical analysis of NDE has been intensively carried out in the literature. The global existence of the NDE (\ref{dirac}) was firstly established in \cite{H1result} for $H^1$ initial data, i.e., $\Phi_0\in H^1(\bR)$. Later, many efforts \cite{lowDKG0,lowdirac0,lowdirac1,Wellpose0,Machihara,lowdirac2} were made to obtain the local and/or global well-posedness of (\ref{dirac}) by allowing initial data with lower regularities. Among them, Selberg and Tesfahun proved the local well-posedness for initial data in almost critical space $H^r$ with any $r>0$ \cite{lowdirac2}, and Candy extended their result to the critical case for any $r\geq0$ \cite{lowdirac1}. We refer interested readers to \cite{lowdiracbook} for a systematical review and some refined Strichartz estimates. Corresponding studies have also been made in high space dimensions case \cite{lowdirachd,lowdirac3d,lowdirac2D} and in coupling case with Klein-Gordon equation or  Maxwell equations et al. \cite{lowDKG1,lowCSD,lowDKG2,lowDKG3,lowDKG4,lowDKG5,lowDKG6,lowDKG7}.

However, the numerical methods proposed for solving the NDE in the literature so far all lead to numerical loss of regularity. More precisely, for the numerical solutions to have a $m$-th ($m\geq1$) order of convergence in time in the Sobolev space  $H^r$, the finite difference time domain methods in \cite{FDDirac,FDadd2,FDadd1} and the Runge-Kutta discontinuous Galerkin methods in \cite{RKdirac} both require initial data at least in $H^{r+m+1}$, and the splitting spectral methods in \cite{DMBao,SplittingDirac2,DiracMaxwell,Splitadd1} and the exponential integrator spectral methods in \cite{JSC,SplittingDirac1,EIadd1} require initial data in $H^{r+m}$. We refer to \cite{Shao} for a detailed numerical comparison of these methods with smooth initial data. For initial data with less regularity than the required, these numerical schemes will fail to reach their optimal convergence rates, suffering from sever order reduction and hence become less efficient and accurate (see also Section \ref{sec:result} for a numerical illustration of this phenomenon). We shall summarize and emphasize the limitations of these methods in the paper by writing down their rigorous error estimates with critical regularity requirements.

To enable efficient numerical integrations of NDE with rough initial data, we shall present a class of integrators in the spirit of low-regularity methods \cite{Kdv,lownls,lowNLS2}, for solving (\ref{dirac}). The methods are derived under the framework of the nested Picard iterative integrators (NPI) \cite{DiracYanSINUM}, while we modify the evolution operator, instead of using the one with ``free Dirac operator'' \cite{Mauser, DiracYanSINUM}. This allows for a decomposition in eigenspaces which ensures exact integrations of terms in the Fourier frequency space in an explicit and efficient way.

 The main novelty  of the paper is to introduce a new class of  \emph{ultra low-regularity integrator (ULI)}  integrators for Dirac equations. This new class will offer optimal first-order time convergence in  $H^r$ for any  $ H^{r}$-data (for $r>\frac12$), i.e., no auxiliary smoothness is needed from the solution at all if one does not take the spatial discretization into account. Therefore, we say that the scheme is a \emph{ultra low-regularity integrator}. In the literature, such ULI methods could only be designed so far for the quadratic Schr\"{o}dinger equation \cite{lownls}. On the other hand, for the cubic nonlinear Schr\"{o}dinger equations or the KdV equation, some addition regularity requirement has to be imposed \cite{Kdv,lownls3,lownls,lowNLS2}.

Based on the first-order ULI method, we build another scheme that offers optimal second-order convergence in $H^r$ for $ H^{r+1}$-data ($r \geq 0$). In particular, second-order convergence in time holds in $L^2$ for solutions in $H^1$.  Up to our knowledge no scheme which allows for such a generous convergence has been proposed in literature so far and compared to the classical Strang splitting scheme, one space derivative is saved by our new second-order method. Note that the requirement that the solution is in $H^1$ can be seen naturally as the spatial convergence requires some smoothness of the solution. We focus on first- and second-order integrators.  Our new  framework can, however, be (in principle) generalized to arbitrary high-order low-regularity integrators. Rigorous convergence results are established and numerical results are presented to underline the performance of the proposed methods in comparison to classical schemes.

The rest of this paper is organized as follows. In Section \ref{sec:2}, we review some standard numerical methods for solving the NDE (\ref{dirac}). Thereby we pay  particular attention on the required regularities. In Section \ref{sec:3}, we present the first-order ULI scheme and its convergence result. Extensions of ULI to the second order are made in Section \ref{sec:4}. Numerical results are presented in Section \ref{sec:result} and some conclusions are drawn in Section \ref{sec:con}.

\section{Convergence of standard methods}\label{sec:2}
In this section, we shall review some of the popular integrators including finite difference methods, exponential integrators and splitting methods for NDE (\ref{dirac}). For simplicity of presentation, we will assume that $V = V_e(x)$ is a given external smooth function in this section and we will focus on the time integrations. As for spatial discretization, finite difference methods \cite{FDDirac}, discontinuous Galerkin methods \cite{RKdirac} and spectral methods \cite{DiracYan,DiracZhao} can all be applied. To simplify numerical implementations, we truncate the whole space problem (\ref{dirac}) onto the torus $\bT=\bR/(2\pi)$:
\begin{subequations}\label{dirac trun}
\begin{align}
&i\partial_{t}\Phi=-i\alpha\partial_x\Phi+\beta\Phi +V\Phi+F(\Phi),\quad t>0,\ x\in\bT,\label{dirac trun a}\\
&\Phi(0,x)=\Phi_0(x),\quad x\in\bT,
\end{align}
\end{subequations}
and impose periodic boundary conditions so that Fourier spectral/pseudospectral method can be easily applied. To simplify the notations, we shall omit the space variable in the following, i.e., we will write $\Phi(t)=\Phi(t,x)$. We denote by $\tau=\Delta t>0$ the time step, $t_n=n\tau$ as the time grids, and define $\Phi^n=(\phi_1^n, \phi_2^n)^T\approx\Phi(t_n)$ as the numerical solution. We write  $\|\cdot\|_{r}:=\|\cdot\|_{H^r(\bT)}$ for the standard Sobolev norm, and as for a vector field $\Psi=(\psi_1,\psi_2)^T$ on $\bT$, we set $\|\Psi\|_{r}:=\sqrt{\|\psi_1\|_r^2+\|\psi_2\|_r^2}$.

For each of the reviewed standard methods in the following, we write down their convergence theorems to address their critical regularity requirements for convergence. The proofs of the theorems are given in Appendix \ref{appendix}.

\subsection{Finite difference methods} As the most traditional numerical discretization, finite difference methods have been widely applied for solving the NDE \cite{SplittingDirac1,FDadd2,FDadd1,RKdirac} and coupled systems, such as the  Dirac-Poisson  and Klein-Gordon-Dirac systems \cite{FDDirac,KGD}. Here, we present two semi-implicit finite difference integrators which are free from CFL conditions.

A first-order semi-implicit finite difference integrator (FD1) for the NDE (\ref{dirac trun}) reads:
\begin{subequations}\label{SIFD1}
\begin{align}
&i\frac{\Phi^{n+1}-\Phi^{n}}{\tau}=-i\alpha\partial_x\Phi^{n+1}+\beta\Phi^n +V\Phi^n+F(\Phi^n),\quad n\geq0,\\
&\Phi^0=\Phi_0,
\end{align}
\end{subequations}
and a second-order semi-implicit finite difference integrator (FD2)  reads:
\begin{subequations}\label{SIFD2}
\begin{align}
&i\frac{\Phi^{n+1}-\Phi^{n-1}}{2\tau}=-i\alpha\partial_x\frac{\Phi^{n+1}+\Phi^{n-1}}{2}
+\beta\Phi^n +V\Phi^n+F(\Phi^n),\quad n\geq0,\\
&\Phi^0=\Phi_0,\quad \Phi^1=\Phi_0-i
\tau\left[-i\alpha\partial_x\Phi_0+\beta\Phi_0 +V\Phi_0+F(\Phi_0)\right],
\end{align}
\end{subequations}
where $\Phi^1$ in the second-order scheme is obtained by a first-order Taylor expansion of equation (\ref{dirac trun a}) at $t=0$.

\begin{theorem}\label{thm FD}(Convergence of finite difference methods) Let $\Phi^n$ denote the numerical solution of the  FD1 scheme (\ref{SIFD1}) for solving the NDE (\ref{dirac trun}). Let $r>\frac{1}{2}$ and $\partial_t^m\Phi\in L^\infty((0,T);H^{r+2-m})$ where $m=0,1,2$, for some $T>0$. Then there exist constants $\tau_0,\,C>0$ depending on $ \Vert \Phi\Vert_{L^\infty((0,T);H^{r+2})}$, $ \Vert \partial_t\Phi\Vert_{L^\infty((0,T);H^{r+1})}$, $ \Vert \partial_{tt}\Phi\Vert_{L^\infty((0,T);H^{r})}$ and $T$, such that for all $0<\tau\leq\tau_0$ and $0\leq t_n\leq T$, we have
 $$\|\Phi(t_n)-\Phi^n\|_{r}\leq C\tau.$$
 Further under assumption $\partial_t^m\Phi\in L^\infty((0,T);H^{r+3-m})$ where
 $m=0,\ldots,3$,  for $\Phi^n$ from the FD2 scheme (\ref{SIFD2}), there exist constants $\tau_0,\,C>0$ depending on $ \Vert \Phi\Vert_{L^\infty((0,T);H^{r+3})}$, $ \Vert \partial_t\Phi\Vert_{L^\infty((0,T);H^{r+2})}$, $ \Vert \partial_{tt}\Phi\Vert_{L^\infty((0,T);H^{r+1})}$, $ \Vert \partial_{ttt}\Phi\Vert_{L^\infty((0,T);H^{r})}$ and $T$, such that for all $0<\tau\leq\tau_0$ and $0\leq t_n\leq T$, we have
 $$\|\Phi(t_n)-\Phi^n\|_{r}\leq C\tau^2.$$
\end{theorem}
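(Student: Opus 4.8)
The plan is to prove both estimates by the standard consistency-and-stability route, closed with a discrete Gronwall inequality in $H^r$. The structural fact driving everything is that the stiff linear part is invertible on $H^r$ without any loss of derivatives, because $\alpha$ is Hermitian. Indeed, passing to Fourier variables turns $\partial_x$ into multiplication by $ik$ and $I+\tau\alpha\partial_x$ into $I+i\tau k\alpha$; since $i\tau k\alpha$ is skew-Hermitian its eigenvalues are purely imaginary, so $I+i\tau k\alpha$ has eigenvalues $1\pm i\tau k$ and $\|(I+\tau\alpha\partial_x)^{-1}\Psi\|_r\le\|\Psi\|_r$ for every $\Psi$ and every $\tau>0$. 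This is what makes the schemes unconditionally stable and, crucially, lets me absorb the implicit term $\partial_x\Phi^{n+1}$ without paying a spatial derivative. For FD2 the relevant object is the Cayley-type operator $(I+\tau\alpha\partial_x)^{-1}(I-\tau\alpha\partial_x)$, which in Fourier reads $(I+i\tau k\alpha)^{-1}(I-i\tau k\alpha)$ with all eigenvalues of modulus one, hence an isometry on $H^r$.

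First I would quantify consistency. With $e^n=\Phi(t_n)-\Phi^n$, I insert the exact solution into (\ref{SIFD1}) and define the defect $\xi^n$. Taylor-expanding $\frac{\Phi(t_{n+1})-\Phi(t_n)}{\tau}$ and $\partial_x\Phi(t_{n+1})$ around $t_n$ and using the equation (\ref{dirac trun a}) at $t_n$ to cancel the leading terms leaves
\begin{equation*}
\xi^n=\frac{i\tau}{2}\partial_{tt}\Phi(\eta_n)+i\tau\alpha\,\partial_x\partial_t\Phi(\zeta_n)
\end{equation*}
for intermediate times $\eta_n,\zeta_n\in(t_n,t_{n+1})$; hence $\|\xi^n\|_r\le C\tau$ under the bounds $\partial_{tt}\Phi\in H^r$ (the $m=2$ hypothesis) and $\partial_t\Phi\in H^{r+1}$ (the $m=1$ hypothesis), the requirement $\Phi\in H^{r+2}$ ($m=0$) being the compatible one obtained by differentiating the equation once in space. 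For FD2 the centred difference and the two-point average are both second-order accurate, and the same cancellation produces a defect of the schematic form $\tfrac{i\tau^2}{6}\partial_{ttt}\Phi+\tfrac{i\tau^2}{2}\alpha\,\partial_x\partial_{tt}\Phi$, so $\|\xi^n\|_r\le C\tau^2$ under $\partial_{ttt}\Phi\in H^r$ and $\partial_{tt}\Phi\in H^{r+1}$.

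Next I would form the error recursion. Subtracting (\ref{SIFD1}) for $\Phi^n$ from the exact relation, multiplying by $-i\tau$, applying $(I+\tau\alpha\partial_x)^{-1}$ and using the isometry bound gives
\begin{equation*}
\|e^{n+1}\|_r\le\|e^n\|_r+\tau\big(\|\beta e^n\|_r+\|Ve^n\|_r+\|F(\Phi(t_n))-F(\Phi^n)\|_r\big)+\tau\|\xi^n\|_r.
\end{equation*}
Since $r>\tfrac12$, $H^r(\bT)$ is a Banach algebra, so $\beta$ is an isometry, the smooth multiplier $V$ is bounded on $H^r$, and the cubic nonlinearity $F$ is locally Lipschitz: $\|F(\Phi(t_n))-F(\Phi^n)\|_r\le L\|e^n\|_r$ with $L$ controlled by $\|\Phi(t_n)\|_r$ and $\|\Phi^n\|_r$. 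The one genuine difficulty is that $L$ depends on the as-yet-uncontrolled numerical solution; I would resolve it by a bootstrap induction, assuming $\|e^n\|_r\le1$ so that $\|\Phi^n\|_r\le1+\sup_{t}\|\Phi(t)\|_r$ and $L$ is uniform, deriving $\|e^{n+1}\|_r\le(1+C\tau)\|e^n\|_r+C\tau^2$, and invoking the discrete Gronwall inequality to obtain $\|e^n\|_r\le C\tau$, which for $\tau\le\tau_0$ keeps $\|e^n\|_r\le1$ and closes the induction.

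For FD2 the same manipulation, now using the Cayley isometry, yields the three-level recursion
\begin{equation*}
\|e^{n+1}\|_r\le\|e^{n-1}\|_r+2\tau(1+C_V+L)\|e^n\|_r+2\tau\|\xi^n\|_r,
\end{equation*}
the additional ingredient being the starting error: since $\Phi^1$ in (\ref{SIFD2}) is a first-order Taylor polynomial of the exact solution, $\|e^1\|_r\le C\tau^2$ provided $\partial_{tt}\Phi(0)\in H^r$. Setting $E^n=\max\{\|e^n\|_r,\|e^{n-1}\|_r\}$ turns the recursion into $E^{n+1}\le(1+C\tau)E^n+C\tau^3$, and the discrete Gronwall lemma again gives $\|e^n\|_r\le C\tau^2$. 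The main obstacle in both cases is precisely this coupling between the cubic nonlinearity and the a priori control of $\|\Phi^n\|_r$: the Lipschitz constant is not available in advance and must be obtained self-consistently from the very estimate one is proving, which is exactly what the bootstrap accomplishes.
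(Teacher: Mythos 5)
Your proposal is correct and follows essentially the same route as the paper's Appendix proof: Taylor-expansion consistency combined with the equation, the Fourier-symbol bound $\|(I+\tau\alpha\partial_x)^{-1}\Psi\|_r\le\|\Psi\|_r$ for unconditional stability, bilinear estimates in the algebra $H^r$ ($r>\tfrac12$) for the cubic term, and a bootstrap induction closed by discrete Gronwall. You actually supply more detail than the paper for FD2 (the Cayley isometry, the three-level recursion, and the $O(\tau^2)$ starting error), which the paper only sketches; the one cosmetic point is that the Lagrange-form remainder with intermediate times should be written in integral form for the vector-valued solution, as the paper does, though the resulting bound is identical.
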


\subsection{Classical exponential integrators}\label{sec:Cexp} Exponential integrators \cite{Hochbruck} have been intensively developed and analyzed for various  evolution equations, particularly for equations involving a stiff linear term. They have been recently considered to solve NDEs in the nonrelativistic limit regime in \cite{JSC, SplittingDirac1,EIadd1}. In the following we recall their construction in case of the Dirac equation.

By writing the NDE (\ref{dirac trun}) for $t\geq t_n$ under the Duhamel's formula, we have
\begin{equation}\label{duhammel0}
\Phi(t_{n+1})=\fe^{-i\tau\mathcal{T}}\Phi(t_n)-i\int_0^\tau\fe^{-i(\tau-s)\mathcal{T}}G(\Phi(t_n+s))\, ds,\quad n\geq0,
\end{equation}
with
\bes
    \mathcal{T}: = -i\alpha\p_x + \beta, \qquad G(\Phi(t_n+s)) = V\Phi(t_n+s) + F(\Phi(t_n+s)).
\ees
Here  $\mathcal{T}$ is known as the free Dirac operator \cite{Mauser}.
By approximating $\Phi(t_n+s)$ in the integrant with $\Phi(t_n)$,
the first-order classical Gautschi-type exponential integrator (EI1) \cite{Gautschi} reads
\be\label{EI1}
   \Phi^{n+1}=\fe^{-i\tau\mathcal{T}}\Phi^n-i\tau\varphi_1(-i\tau\mathcal{T})G^n,\  n\geq0,
\ee
with $G^n:=G(\Phi^n)$ and
\begin{equation}\label{varphi1}\varphi_1(z)=\frac{\fe^{z}-1}{z}.\end{equation} On the other hand, by extrapolation  $G(\Phi(t_n+s))\approx G(\Phi(t_n))+\frac{s}{\tau}(G(\Phi(t_n))-G(\Phi(t_{n-1})))$ as in \cite{SplittingDirac1}, the second-order Gautschi-type exponential integrator (EI2)  reads
 \begin{align}\label{EI2}
   \Phi^{n+1}=\fe^{-i\tau\mathcal{T}}\Phi^n-i\tau\varphi_1(-i\tau\mathcal{T})
   G^n-i\tau\varphi_2(-i\tau\mathcal{T})(G^n-G^{n-1}),\quad n\geq1,
   \end{align}
with $\varphi_2(z)=\frac{\fe^{z}-z-1}{z^2}$.

\begin{theorem}\label{thm EI}(Convergence of exponential integrators) Let $\Phi^n$ denote the numerical solution of the EI1 scheme (\ref{EI1}) for solving the NDE (\ref{dirac trun}). Let $r>\frac{1}{2}$ and $\partial_t^m\Phi\in L^\infty((0,T);H^{r+1-m})$ where $m=0,1$, for some $T>0$. Then there exist constants $\tau_0,\,C>0$ depending on $ \Vert \Phi\Vert_{L^\infty((0,T);H^{r+1})}$, $ \Vert \partial_t\Phi\Vert_{L^\infty((0,T);H^{r})}$  and $T$, such that for all $0<\tau\leq\tau_0$ and $0\leq t_n\leq T$, we have
 $$\|\Phi(t_n)-\Phi^n\|_{r}\leq C\tau.$$
 Further under assumption $\partial_t^m\Phi\in L^\infty((0,T);H^{r+2-m})$ where $m=0,1,2$,
then for $\Phi^n$ from the EI2 scheme (\ref{EI2}), there exist constants $\tau_0,\,C>0$ depending on $ \Vert \Phi\Vert_{L^\infty((0,T);H^{r+2})}$, $ \Vert \partial_t\Phi\Vert_{L^\infty((0,T);H^{r+1})}$, $ \Vert \partial_{tt}\Phi\Vert_{L^\infty((0,T);H^{r})}$ and $T$, such that for all $0<\tau\leq\tau_0$ and $0\leq t_n\leq T$, we have
 $$\|\Phi(t_n)-\Phi^n\|_{r}\leq C\tau^2.$$
\end{theorem}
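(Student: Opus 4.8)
The plan is to establish both rates by the standard consistency-plus-stability paradigm (Lady Windermere's fan), handling EI1 and EI2 in parallel and exploiting that $H^r(\bT)$ is a Banach algebra for $r>\frac12$: since $V=V_e$ is smooth and $F$ is cubic, $G(\Phi)=V_e\Phi+F(\Phi)$ is locally Lipschitz from $H^r$ to $H^r$, with constant controlled by the $H^r$-norms of its arguments. First I would record the structural facts about the propagator and the filter functions. Since $\alpha,\beta$ are Hermitian, the symbol of $\mathcal{T}$ at frequency $k$ is the Hermitian matrix $\alpha k+\beta$ with eigenvalues $\pm\sqrt{1+k^2}$; hence $\mathcal{T}$ is self-adjoint, $\fe^{-it\mathcal{T}}$ is unitary on every $H^r$, and $-i\tau\mathcal{T}$ has purely imaginary spectrum. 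As $\varphi_1(z)=(\fe^z-1)/z$ and $\varphi_2(z)=(\fe^z-z-1)/z^2$ are bounded on $i\bR$ (indeed $|\varphi_1(i\theta)|\le1$ and $\varphi_2$ decays like $1/\theta$), the functional calculus gives $\|\varphi_j(-i\tau\mathcal{T})\|_{H^r\to H^r}\le C$, uniformly in $\tau$, applied mode by mode. I would also note the identity $\tau\varphi_1(-i\tau\mathcal{T})=\int_0^\tau \fe^{-i(\tau-s)\mathcal{T}}\,ds$, which exhibits EI1 as the Duhamel formula (\ref{duhammel0}) with the frozen value $G(\Phi(t_n))$ in place of $G(\Phi(t_n+s))$.

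Next I would analyze the local error. For EI1, subtracting the scheme applied to exact data from (\ref{duhammel0}) gives the one-step defect $\mathcal{E}^n=-i\int_0^\tau \fe^{-i(\tau-s)\mathcal{T}}[G(\Phi(t_n+s))-G(\Phi(t_n))]\,ds$; writing the bracket as $\int_0^s \partial_\sigma G(\Phi(t_n+\sigma))\,d\sigma$ and using the algebra property together with $\partial_t\Phi\in L^\infty H^r$ yields $\|\mathcal{E}^n\|_r\le C\tau^2$, the constant depending on $\|\Phi\|_{L^\infty H^r}$ and $\|\partial_t\Phi\|_{L^\infty H^r}$. For EI2 the scheme replaces $G(\Phi(t_n+s))$ by its linear extrapolant $G(\Phi(t_n))+\frac{s}{\tau}(G(\Phi(t_n))-G(\Phi(t_{n-1})))$; a second-order Taylor expansion of $s\mapsto G(\Phi(t_n+s))$ combined with the $O(\tau)$ accuracy of the backward difference for $\partial_s G|_{s=0}$ gives $\|\mathcal{E}^n\|_r\le C\tau^3$, now controlled by $\|\partial_{tt}\Phi\|_{L^\infty H^r}$ and the lower-order norms through the product terms in $\partial_{ss}(G\circ\Phi)$. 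This bookkeeping is where the stated regularity enters: via the equation $\partial_t\Phi=-i\mathcal{T}\Phi-iG(\Phi)$, having $\partial_t\Phi\in H^r$ forces $\Phi\in H^{r+1}$, and $\partial_{tt}\Phi\in H^r$ forces $\partial_t\Phi\in H^{r+1}$ and $\Phi\in H^{r+2}$, which is precisely the cascade of hypotheses.

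Then I would prove stability and close with discrete Gronwall. Writing $e^n=\Phi(t_n)-\Phi^n$, unitarity of $\fe^{-i\tau\mathcal{T}}$, boundedness of the $\varphi_j$, and the local Lipschitz bound for $G$ give, for EI1, $\|e^{n+1}\|_r\le(1+C\tau)\|e^n\|_r+\|\mathcal{E}^n\|_r$, and for EI2 the two-step estimate $\|e^{n+1}\|_r\le\|e^n\|_r+C\tau(\|e^n\|_r+\|e^{n-1}\|_r)+\|\mathcal{E}^n\|_r$. Summing $\sum_k\|\mathcal{E}^k\|_r\le (T/\tau)\,O(\tau^{m+1})=O(\tau^m)$ and applying discrete Gronwall yields the claimed $O(\tau)$ and $O(\tau^2)$ bounds, the second also needing a starting value $\Phi^1$ with $\|e^1\|_r=O(\tau^2)$, obtained from one EI1 step.

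The main obstacle is the nonlinear bootstrap: the Lipschitz constant of $G$ depends on $\|\Phi^n\|_r$, so one must first secure an a priori bound $\|\Phi^n\|_r\le \|\Phi\|_{L^\infty H^r}+1$. I would do this by induction on $n$: the inductive bound $\|e^n\|_r\le C\tau^m$ is small for $\tau\le\tau_0$ and keeps $\Phi^n$ in a fixed ball around the exact solution, which validates the uniform Lipschitz constant used in the very same step. The delicate point is to fix the ball radius and choose $\tau_0$ so that this circular dependence closes, after which the constants $C,\tau_0$ depend only on the stated solution norms and $T$.
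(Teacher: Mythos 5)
Your proposal is correct and follows essentially the same route as the paper's proof in Appendix A.2: the same local-error representation $\xi^n=-i\int_0^\tau \fe^{-i(\tau-s)\mathcal{T}}[G(\Phi(t_n+s))-G(\Phi(t_n))]\,ds$ bounded via $\partial_t\Phi\in L^\infty H^r$ and the algebra property, the same use of the isometry of $\fe^{-i\tau\mathcal{T}}$ and uniform boundedness of $\varphi_1(-i\tau\mathcal{T})$ on $H^r$ for stability, and the same induction/Gronwall closure of the a priori bound on $\|\Phi^n\|_r$. Your additional remarks (the regularity cascade through the equation and the need for an $O(\tau^2)$-accurate $\Phi^1$ for EI2) are correct refinements of details the paper leaves implicit.
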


\subsection{Splitting methods} The time splitting or operator splitting method can be considered as one of the most popular numerical techniques for solving evolutions equations of first order in time \cite{Splitting}.
They have been proposed in the literature for the NDEs \cite{SplittingDirac1,SplittingDirac2,Splitadd1} and for Maxwell-Dirac system \cite{DMBao,DiracMaxwell}. The idea of the time splitting methods is to split (\ref{dirac trun}) into the  two systems
\bes
\Phi_{kin}(t):\quad i\partial_t\Phi=-i\alpha\partial_x\Phi
+\beta\Phi,\quad t>0,\ x\in\bT,
\ees
and
\bes
\Phi_{pon}(t):\quad i\partial_t\Phi=V\Phi+F(\Phi),\quad t>0,\ x\in\bT.
\ees
Both of the resulting flows can be exactly integrated in time, i.e.,
$$\Phi_{kin}(t):\ \Phi(t)=\fe^{-it\mathcal{T}}\Phi(0),\quad
\Phi_{pon}(t):\ \Phi(t)=\fe^{-it\left(V\cdot Id+\lambda(|\phi_1|^2-|\phi_2|^2)\beta\right)}\Phi(0),$$
where $Id$ is the $2\times2$ identity matrix.
The two flows are then combined as $\Phi(\tau)\approx \Phi_{kin}\left(\tau\right)\circ\Phi_{pon}\left(\tau
\right)$ for the Lie splitting scheme:
\begin{subequations}\label{TSFP1}
\begin{align}
&\Phi^n_-=\fe^{-i\tau\left(V\cdot Id+\lambda(|\phi_1^n|^2-|\phi_2^n|^2)\beta\right)}\Phi^n,\quad n\geq0,\label{TSFP1a}\\
&\Phi^{n+1}=\fe^{-i\tau\mathcal{T}}\Phi^n_-,\label{TSFP1b}
\end{align}
\end{subequations}
or combined as $\Phi(\tau)\approx \Phi_{pot}\left(\frac{\tau}{2}\right)\circ\Phi_{kin}\left(\tau
\right)\circ\Phi_{pot}\left(\frac{\tau}{2}\right)$ for the Strang splitting scheme:
\begin{subequations}\label{TSFP2}
\begin{align}
&\Phi^n_-=\fe^{-i\frac{\tau}{2}\left(V\cdot Id+\lambda(|\phi_1^n|^2-|\phi_2^n|^2)\beta\right)}\Phi^n,\quad n\geq0,\\
&\Phi^{n}_+=\fe^{-i\tau\mathcal{T}}\Phi^n_-,\\
&\Phi^{n+1}=\fe^{-i\frac{\tau}{2}\left(V\cdot Id+\lambda
(|\phi_{+,1}^n|^2-|\phi_{+,2}^n|^2)\beta\right)}\Phi^n_+,
\end{align}
\end{subequations}
with $\Phi_+^n = (\phi_{+,1}^n, ~\phi_{-,1}^n)^T, \Phi^0=\Phi_0$.

\begin{theorem}\label{thm SP}(Convergence of splitting methods)  Let $\Phi^n$ denote the numerical solution of the  Lie splitting scheme (\ref{TSFP1}) for solving the NDE (\ref{dirac trun}). Let $r>\frac{1}{2}$ and $\Phi\in L^\infty((0,T);H^{r+1})$ for some $T>0$. Then there exist constants $\tau_0,\,C>0$ depending on $ \Vert \Phi\Vert_{L^\infty((0,T);H^{r+1})}$ and $T$, such that for all $0<\tau\leq\tau_0$ and $0\leq t_n\leq T$, we have
 $$\|\Phi(t_n)-\Phi^n\|_{r}\leq C\tau.$$
 Further under assumption $\Phi\in L^\infty((0,T);H^{r+2})$,
then for $\Phi^n$ from the Strang splitting scheme (\ref{TSFP2}), there exist constants $\tau_0,\,C>0$ depending on $ \Vert \Phi\Vert_{L^\infty((0,T);H^{r+2})}$ and $T$, such that for all $0<\tau\leq\tau_0$ and $0\leq t_n\leq T$, we have
 $$\|\Phi(t_n)-\Phi^n\|_{r}\leq C\tau^2.$$
\end{theorem}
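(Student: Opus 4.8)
The plan is to treat both schemes by the standard Lady Windermere argument: combine a local error estimate with a nonlinear stability (Lipschitz) bound and close the recursion with a discrete Gronwall inequality. Throughout I will use that for $r>\frac12$ the space $H^r(\bT)$ is a Banach algebra, so that products of the spinor components — and in particular the cubic nonlinearity $F(\Phi)$ and the potential term $V\Phi$ — are controlled in $\|\cdot\|_r$ by powers of $\|\Phi\|_r$, and that the kinetic propagator $\fe^{-it\mathcal{T}}$ is an isometry on every $H^r$ since $\mathcal{T}$ is self-adjoint and commutes with $\p_x$. I also record that the pointwise moduli $|\phi_1|,|\phi_2|$ are preserved along the potential flow $\Phi_{pon}$, so that $\Phi_{pon}(t)$ is exactly the pointwise matrix multiplication displayed in the statement and is uniformly bounded on $H^r$ on bounded sets.

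First I would establish the local error of the Lie scheme. Writing $\Psi_\tau=\Phi_{kin}(\tau)\circ\Phi_{pon}(\tau)$ and representing both the exact flow (via the Duhamel formula \eqref{duhammel0}) and $\Psi_\tau$ applied to the exact value $\Phi(t_n)$ in mild form, the leading terms $\fe^{-i\tau\mathcal{T}}\Phi(t_n)$ cancel and, after factoring out the isometry $\fe^{-i\tau\mathcal{T}}$, the local error reduces to
\begin{equation*}
\mathcal{E}_n=-i\fe^{-i\tau\mathcal{T}}\int_0^\tau\Big[\fe^{is\mathcal{T}}G(\Phi(t_n+s))-G\big(\Phi_{pon}(s)\Phi(t_n)\big)\Big]\,ds.
\end{equation*}
The bracket vanishes at $s=0$, so $\mathcal{E}_n=O(\tau^2)$; differentiating in $s$ at $s=0$ and using the equation $\p_t\Phi=-i\mathcal{T}\Phi-iG(\Phi)$ shows that the $O(\tau^2)$ coefficient is governed by the commutator $\mathcal{T}G(\Phi)-G'(\Phi)\mathcal{T}\Phi$. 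The $\beta$-part is a zeroth-order (derivative-free) term, and in the $-i\alpha\p_x$ part the convective contributions $V\p_x\Phi$ cancel, so that the only surviving contribution is of the form $[\alpha,F'(\Phi)]\p_x\Phi$ together with a harmless $\alpha V'\Phi$ term; the former carries exactly one spatial derivative of $\Phi$. Using the algebra property to bound $F'(\Phi)$ in $\|\cdot\|_r$ by $\|\Phi\|_r^2$, this yields $\|\mathcal{E}_n\|_r\leq C\tau^2$ with $C$ depending on $\|\Phi\|_{L^\infty((0,T);H^{r+1})}$, which is precisely the claimed regularity.

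Next I would prove stability: on any ball of $H^r$ the map $\Psi_\tau$ satisfies $\|\Psi_\tau(\Phi)-\Psi_\tau(\widetilde\Phi)\|_r\leq(1+C\tau)\|\Phi-\widetilde\Phi\|_r$. This follows because $\Phi_{kin}(\tau)$ is an isometry and $\Phi_{pon}(\tau)=\mathrm{Id}-i\int_0^\tau G(\Phi_{pon}(s)\,\cdot\,)\,ds$ is a Lipschitz perturbation of the identity, the Lipschitz constant of $G$ on bounded sets of $H^r$ being finite by the algebra property. A short bootstrap then keeps the numerical iterates in a fixed ball around the exact solution for $\tau\leq\tau_0$. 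Writing $e_n=\Phi(t_n)-\Phi^n$ and splitting $e_{n+1}=\mathcal{E}_n+\big(\Psi_\tau(\Phi(t_n))-\Psi_\tau(\Phi^n)\big)$, the local-error and stability bounds give $\|e_{n+1}\|_r\leq(1+C\tau)\|e_n\|_r+C\tau^2$, and the discrete Gronwall lemma produces $\|e_n\|_r\leq C\tau$ on $[0,T]$, proving the first assertion.

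For the Strang scheme the same three-step scheme applies, the only difference being the order of the local error. Here the symmetric composition $\Phi_{pon}(\tfrac\tau2)\circ\Phi_{kin}(\tau)\circ\Phi_{pon}(\tfrac\tau2)$ makes the $O(\tau^2)$ commutator term cancel, so a second-order Taylor expansion in $s$ of the same mild representation leaves a local error $\mathcal{E}_n=O(\tau^3)$ whose coefficient is an iterated commutator of $\mathcal{T}$ and $G$. The worst such term is of type $[\mathcal{T},[\mathcal{T},G]]$, which carries two spatial derivatives of $\Phi$; bounding it in $\|\cdot\|_r$ via the algebra property costs two derivatives and gives $\|\mathcal{E}_n\|_r\leq C\tau^3$ with $C$ depending on $\|\Phi\|_{L^\infty((0,T);H^{r+2})}$. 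Stability is identical to the Lie case, and the discrete Gronwall argument upgrades the $O(\tau^3)$ local error to the global bound $\|e_n\|_r\leq C\tau^2$. The main obstacle in the whole argument is the clean bookkeeping of the local error: isolating the commutator (respectively double commutator) structure, verifying the cancellation of the convective and scalar parts, and checking that the surviving terms lose exactly one (respectively two) derivative so that the stated $H^{r+1}$ and $H^{r+2}$ regularities — and no more — suffice.
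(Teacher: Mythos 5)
Your proposal is correct and follows essentially the same route as the paper: a Lady Windermere argument combining an $O(\tau^2)$ (resp.\ $O(\tau^3)$) local error with an $H^r$ stability bound based on the isometry of $\fe^{-i\tau\mathcal{T}}$, the algebra property of $H^r$ for $r>\frac12$, and the exact solvability of the potential flow. The only difference is bookkeeping: the paper Taylor-expands $\fe^{-i\tau B^n}$ and Picard-iterates the Duhamel formula separately, locating the one-derivative loss in the term $\zeta_2^n(s)=\fe^{is\mathcal{T}}G(\fe^{-is\mathcal{T}}\Phi(t_n))-G(\Phi(t_n))$, whereas you difference the two mild forms directly and read off the same commutator $\mathcal{T}G(\Phi)-G'(\Phi)\mathcal{T}\Phi$ — these are equivalent.
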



\section{First-order ultra low-regularity integrators}\label{sec:3}
In this section, we derive the ultra low-regularity integrators of first-order accuracy for solving the NDEs and present their convergence theorems. We shall consider in a sequel the case of an external electrical potential and the case of  Dirac-Poisson system.

\subsection{NDE with external field}
We firstly consider the NDE \eqref{dirac trun} with a given function $V=V_e(x)$, under the same notations as in Section \ref{sec:2}. Similar to exponential integrators in Section \ref{sec:Cexp}, our method is also constructed based on the integral form of the NDE. However, we apply Duhamel's formula for \eqref{dirac trun} for $n=0,1,\ldots,\,s\geq0$ in an alternative way as follows
\begin{align}\label{dumammel}
  \Phi(t_n+s)=\fe^{-s\alpha\partial_x}\Phi(t_n)-i\int_0^s\fe^{-(s-\rho)\alpha\partial_x}
 \left[\beta\Phi(t_n+\rho) +V\Phi(t_n+\rho) + F(\Phi(t_n+\rho))\right] d\rho.
\end{align}
Here, we use the evolution operator $\fe^{-s \alpha \p_x}$ instead of $\fe^{-i\mathcal{T}s}$ in \eqref{duhammel0}, which is crucial to the success of the method. It is known that the `free Dirac operator' $\mathcal{T}$ is diagonalizable in Fourier space and it can be decomposed as \cite{Mauser}
\bes
    \mathcal{T} = \sqrt{Id-\partial_{xx}}~\Pi_+^{\mathcal{T}} - \sqrt{Id-\partial_{xx}}~\Pi_-^{\mathcal{T}},
\ees
with the projectors $\Pi_\pm^{\mathcal{T}}$ defined as
\bes
    \Pi_\pm^{\mathcal{T}} = \frac{1}{2}\left[Id \pm (Id-\partial_{xx})^{-1/2}\mathcal{T} \right].
\ees
As can be seen that the spatial differentiation operator is involved nonlinearly in the above decomposition. This works well in the numerical studies for the nonrelativistic limit regime \cite{DiracYan,DiracYanSINUM}, but here it makes things difficult in designing low-regularity methods.
In contrast, the operator $\alpha \p_x$ can be decomposed as
\be
    \alpha \p_x = \p_x \Pi_+ - \p_x \Pi_-
\ee
with
\bes
    \Pi_+ = \frac{1}{2} \begin{pmatrix}
1 & 1 \\
1 & 1
\end{pmatrix}, \qquad \Pi_- = \frac{1}{2} \begin{pmatrix}
1 & -1 \\
-1 & 1
\end{pmatrix},
\ees
where $\p_x$ is linearly involved. It can be verified that $\Pi_+ + \Pi_- = I_2, ~ \Pi_+\Pi_- = \Pi_-\Pi_+ = \mathbf{0}, ~ \Pi_\pm^2 = \Pi_\pm$, and
\bes
    (\alpha\p_x)^k = (\p_x)^k \Pi_+ + (-\p_x)^k \Pi_-, \qquad k \in \mathbf{N},
\ees
which implies the following relation
\be\label{eq:3-3}
    \fe^{s\alpha\p_x} = \fe^{s\p_x} \Pi_+ + \fe^{-s\p_x}\Pi_-.
\ee
Therefore, a nested Picard iteration based on (\ref{dumammel}) can be effectively computed in analogous to \cite{DiracYanSINUM} as follows.

Letting $s=\rho$ for  $0\leq\rho\leq \tau$ in (\ref{dumammel}), we find
\begin{equation}\label{app1}
\Phi(t_n+\rho) = \fe^{-\rho\alpha\partial_x}\Phi(t_n) + O(\tau).
\end{equation}
Here (and in the following) $O(\tau)$ denotes a remainder of order $\tau$ in time which does not require any additional regularity of the solution, i.e.,
\[
\Phi(t_n+\rho) = \fe^{-\rho\alpha\partial_x}\Phi(t_n) + O(\tau)\quad \text{ if } \quad \Vert \Phi(t_n+\rho) - \fe^{-\rho\alpha\partial_x}\Phi(t_n) \Vert_r \leq c \tau
\]
with $c$ depending on $t_n$ and $\sup_{0 \leq \rho \leq \tau} \Vert \Phi (t_n+\rho)\Vert_r$.

Plugging (\ref{app1}) into  (\ref{dumammel}) and setting $s=\tau$, we get
\be\label{sec3 eq1}
    \Phi(t_{n+1}) = \fe^{-\tau \alpha \p_x}\Phi(t_n) - i \fe^{-\tau\alpha\p_x} \left[I_1(t_n) + I_2(t_n) + I_3(t_n) \right] + O(\tau^2),
\ee
where
\begin{equation}\label{I13}
\begin{split}
 &I_1(t_n):=\int_0^\tau\fe^{\rho\alpha\partial_x}\beta\fe^{-\rho\alpha\partial_x}\Phi(t_n) \, d\rho,\quad
 I_2(t_n):=\int_0^\tau\fe^{\rho\alpha\partial_x}V\fe^{-\rho\alpha\partial_x}\Phi(t_n)\,  d\rho,\\
 &I_3(t_n):=\int_0^\tau \fe^{\rho\alpha\partial_x}F(\fe^{-\rho \alpha \p_x} \Phi(t_n))\, d\rho.\end{split}
\end{equation}

For $I_1(t_n)$, by using the relation \eqref{eq:3-3}, we have
\bes
\begin{split}
    \fe^{\rho\alpha\partial_x}\beta\fe^{-\rho\alpha\partial_x}\Phi(t_n) & = (\fe^{\rho\p_x} \Pi_+ + \fe^{-\rho \p_x} \Pi_-)\beta (\fe^{-\rho\p_x} \Pi_+ + \fe^{\rho \p_x} \Pi_-) \Phi(t_n)\\
    & = (\fe^{2\rho \p_x}\beta\Pi_-  + \fe^{-2\rho \p_x}\beta\Pi_+)\Phi(t_n),
\end{split}
\ees
since $\Pi_\pm \beta\Pi_\pm = \mathbf{0}$ and $\Pi_\pm\beta\Pi_\mp = \beta\Pi_\mp$. Therefore, we have the exact integration:
\be\label{eq:I1}
I_1(t_n)=\tau\left[ \varphi_1(2\tau\p_x)\beta\Pi_- + \varphi_1(-2\tau\p_x)\beta\Pi_+\right] \Phi(t_n),
\ee
where $\varphi_1(\cdot)$ is defined in (\ref{varphi1}).

For $I_2(t_n)$, we can analogously write the integrand as
\bes
    \fe^{\rho\alpha\partial_x}V\fe^{-\rho\alpha\partial_x}\Phi(t_n) = \left( \fe^{\rho\p_x} V \fe^{-\rho\p_x}\Pi_+ + \fe^{-\rho\p_x} V \fe^{\rho\p_x}\Pi_- \right)\Phi(t_n).
\ees
What is interesting is that the integration of the above function can be performed in an exact and explicit way in the physical space. For some general function $\psi=\psi(x):\bT\to\bC$, we have
\begin{align*}
 &\int_0^\tau
 \fe^{\rho\partial_x}V\fe^{-\rho\partial_x}\psi d\rho
 =\int_0^\tau \sum_{l\in\bZ}
 \sum_{\begin{array}{cc}&l_1,l_2\in\bZ\\&l_1+l_2=l\end{array}}
\fe^{ilx}\fe^{il\rho}\widehat{V}_{l_1}\fe^{-il_2\rho}\widehat{\psi}_{l_2}d\rho\\
&= \sum_{l\in\bZ}
 \sum_{\begin{array}{cc}&l_1,l_2\in\bZ\\&l_1+l_2=l\end{array}}
\fe^{ilx}\int_0^\tau
\fe^{i(l-l_2)\rho}d\rho\widehat{V}_{l_1}\widehat{\psi}_{l_2}
=\sum_{l\in\bZ}
 \sum_{\begin{array}{cc}&l_1,l_2\in\bZ\\&l_1+l_2=l\end{array}}
 \fe^{ilx}\tau\varphi_1(i\tau l_1)\widehat{V}_{l_1}\widehat{\psi}_{l_2}\\
&=\tau(\varphi_1(\tau\partial_x)V )\psi,
\end{align*}
where $\widehat{\psi}_l$ denotes the Fourier coefficients of $\psi$,
and similarly
$$\int_0^\tau
 \fe^{-\rho\partial_x}V\fe^{\rho\partial_x}\psi d\rho
 =\tau(\varphi_1(-\tau\partial_x)V )\psi.$$
Hence, we have the exact integration for $I_2(t_n)$ as
\be\label{eq:I2}
    I_2(t_n) = \tau \left[(\varphi_1(\tau\p_x) V) \Pi_+ + (\varphi_1(-\tau\p_x) V)\Pi_-\right]\Phi(t_n).
\ee
This exact integration is expressed explicitly in the physical space so that in practice it can be obtained efficiently by means of fast Fourier transform, which is crucial for the success of our method.

For the integration of the nonlinear term in $I_3(t_n)$, firstly we have
\bes
\begin{split}
    g : = & ~ (\fe^{-\rho \alpha \p_x}\Phi(t_n))^* \beta (\fe^{-\rho \alpha \p_x}\Phi(t_n))\\
    = &~ \left[\fe^{-\rho\p_x}(\Phi(t_n))^*\fe^{\rho\p_x}\beta\Pi_- + \fe^{\rho\p_x}(\Phi(t_n))^*\fe^{-\rho\p_x}\beta\Pi_+\right]\Phi(t_n)\\
    = & ~ \frac{1}{2}\left[(\fe^{\rho\p_x}\phi_-(t_n))(\fe^{-\rho\p_x} \overline{\phi_+}(t_n))+ (\fe^{\rho\p_x} \overline{\phi_-} (t_n))(\fe^{-\rho\p_x}\phi_+(t_n)) \right],
\end{split}
\ees
where $$\phi_\pm(t_n) := \phi_1(t_n) \pm \phi_2(t_n),\quad \Phi(t_n)=\left(
\phi_1(t_n), \phi_2(t_n)\right)^T.$$
Then $I_3(t_n)$ reads
\bes
\begin{split}
    I_3(t_n) & = \int_0^\tau \fe^{\rho \alpha\p_x} g \beta \fe^{-\rho\alpha\p_x} \Phi(t_n) \,d\rho\\
    & = \int_0^\tau \left[\fe^{\rho\p_x}g\fe^{\rho\p_x}\beta\Pi_- + \fe^{-\rho\p_x}g\fe^{-\rho\p_x}\beta\Pi_+\right]\Phi(t_n)\,d\rho,
\end{split}
\ees
which as we shall see can also be done explicitly in the physical space. Let us consider three general complex-valued scalar functions $ \psi:=\psi(x),~\phi:=\phi(x),~\varphi:=\varphi(x)$, and we find
\begin{align*}
 & \int_0^\tau\fe^{\rho\partial_x}[
(\fe^{\rho\partial_x}\psi)
(\fe^{-\rho\partial_x}\phi)\fe^{\rho\partial_x}\varphi]\, d\rho
= \int_0^\tau\sum_{l\in\bZ}\fe^{ilx}\fe^{il\rho}
 \sum_{\begin{array}{cc}&l_1,l_2,l_3\in\bZ\\&l_1+l_2+l_3=l\end{array}}
\fe^{il_1\rho}\widehat{\psi}_{l_1}\fe^{-il_2\rho}\widehat{\phi}_{l_2}
\fe^{il_3\rho}\widehat{\varphi}_{l_3}\, d\rho\\
&= \sum_{l\in\bZ}
 \sum_{\begin{array}{cc}&l_1,l_2,l_3\in\bZ\\&l_1+l_2+l_3=l\end{array}}
\fe^{ilx}\int_0^\tau
\fe^{2i(l_1+l_3)\rho}d\rho\widehat{\psi}_{l_1}\widehat{\phi}_{l_2}\widehat{\varphi}_{l_3}
=\tau\phi\varphi_1(2\tau\partial_x)(\psi\varphi).
\end{align*}
Similarly,
$$\int_0^\tau\fe^{-\rho\partial_x}[
(\fe^{\rho\partial_x}\psi)
(\fe^{-\rho\partial_x}\phi)\fe^{-\rho\partial_x}\varphi]\, d\rho
=\tau\psi\varphi_1(-2\tau\partial_x)(\phi\varphi).$$
Hence, we have for $I_3(t_n)$:
\be\label{eq:I3}
\begin{split}
    I_3(t_n) & = \frac{\tau \lambda}{2}\Big[ \overline{\phi_+}(t_n)\varphi_1(2\tau\p_x)\left(\phi_-(t_n) \beta\Pi_-\Phi(t_n)\right) + \phi_+(t_n)\varphi_1(2\tau\p_x)\left(\overline{\phi_-}(t_n) \beta\Pi_-\Phi(t_n)\right) \\
    & \quad + \phi_-(t_n)\varphi_1(-2\tau\p_x)\left(\overline{\phi_+}(t_n) \beta\Pi_+\Phi(t_n)\right) + \overline{\phi_-}(t_n)\varphi_1(-2\tau\p_x)\left(\phi_+(t_n) \beta\Pi_+\Phi(t_n)\right)\Big].
\end{split}
\ee

In summary of (\ref{sec3 eq1})-(\ref{eq:I3}), the detailed \textbf{scheme of the first-order ultra low-regularity integrator} \textbf{(ULI) for integrating NDE} (\ref{dirac trun}) \textbf{with given $V$} reads: denote  $\Phi^n=(\phi_1^n, \phi_2^n)^T\approx\Phi(t_n)$ for $n\geq0$,
let $\Phi^0=\Phi_0$ and then
\begin{align} \label{ULI}
\Phi^{n+1}=&\fe^{-\tau\alpha\partial_x}\Phi^n-i\fe^{-\tau\alpha\partial_x}(I_1^n+I_2^n+I_3^n)
=:\Theta_{\mathrm{ext}}(\Phi^{n}),
\quad n\geq0,
 \end{align}
 where
 \begin{align}
 &I_1^n=\tau\left[ \varphi_1(2\tau\p_x)\beta\Pi_- + \varphi_2(-2\tau\p_x)\beta\Pi_+\right] \Phi^n,\quad
 I_2^n= \tau \left[(\varphi_1(\tau\p_x) V) \Pi_+ + (\varphi_1(-\tau\p_x) V)\Pi_-\right]\Phi^n,\nonumber\\
&I_3^n=\frac{\tau\lambda}{4}
\begin{pmatrix}\overline{\phi_+^n}\varphi_1(2\tau\partial_x)(\phi_-^n)^2
+\phi_+^n\varphi_1(2\tau\partial_x)|\phi_-^n|^2
+\phi_-^n\varphi_1(-2\tau\partial_x)|\phi_+^n|^2
+\overline{\phi_-^n}\varphi_1(-2\tau\partial_x)(\phi_+^n)^2\\
\overline{\phi_+^n}\varphi_1(2\tau\partial_x)(\phi_-^n)^2
+\phi_+^n\varphi_1(2\tau\partial_x)|\phi_-^n|^2
-\phi_-^n\varphi_1(-2\tau\partial_x)|\phi_+^n|^2
-\overline{\phi_-^n}\varphi_1(-2\tau\partial_x)(\phi_+^n)^2
\end{pmatrix},\label{ULI-I13}
 \end{align}
 with
 $\phi^n_\pm=\phi_1^n\pm\phi_2^n$.

The proposed ULI scheme, i.e., \eqref{ULI} with \eqref{ULI-I13}, is fully explicit. In practical computations, the spatial discretization of ULI could easily be done by the Fourier pseudospectral method \cite{ST}, where the computational cost at each time level is $O(N\log{N})$ with $N$ the number of the total Fourier modes. Thus, the ULI scheme is of similar computational costs as the standard methods in Section~\ref{sec:2}.

\subsection{NDE with consistent field} Next, we present the integration for the NDE coupled with the Poisson equation:
\begin{subequations}\label{dirac-poisson}
\begin{align}
&i\partial_{t}\Phi=-i\alpha\partial_x\Phi+\beta\Phi +V\Phi+F(\Phi),\quad t>0,\ x\in\bT,\label{dirac-poisson trun a}\\
&-\partial_{xx} V=|\Phi|^2,\quad t\geq0,\ x\in\bT,\quad \int_{\bT}Vdx=0,\quad t\geq0,\\
&\Phi(0)=\Phi_0,\quad x\in\bT,
\end{align}
\end{subequations}
where periodic boundary conditions are imposed for both $\Phi$ and $V$.

The Duhamel's formula for (\ref{dirac-poisson}) reads as
\begin{align}\label{dumammel-DP}
  \Phi(t_n+s)=\fe^{-s\alpha\partial_x}\Phi(t_n)-i\int_0^s\fe^{-(s-\rho)\alpha\partial_x}
 \left[\beta\Phi(t_n+\rho) +V(t_n+\rho)\Phi(t_n+\rho) + F(\Phi(t_n+\rho))\right] d\rho.
\end{align}
Let $s=\tau$ in  (\ref{dumammel-DP}), and by adopting $\Phi(t_n+\rho)\approx\fe^{-\rho\alpha\partial_x}\Phi(t_n)$ as before,  the
approximation goes the same as (\ref{sec3 eq1}) but with $I_2(t_n)$ (re-)defined as:
\begin{equation}\label{I2dp0}
I_2(t_n):=-\int_0^\tau\fe^{\rho\alpha\partial_x}
\left(\partial_{xx}^{-1}|\fe^{-\rho \alpha \p_x}\Phi(t_n)|^2\right)\fe^{-\rho\alpha\partial_x}\Phi(t_n)\,  d\rho.\end{equation}
Here, we define for some general function $\psi(x):\bT\to\bC$,
\begin{equation*}
 \partial_{xx}^{-1}\psi(x):=-\sum_{l\neq0}\frac{1}{l^2}\widehat{\psi}_l\fe^{ilx},
\end{equation*}
where $\partial_{xx}^{-1}$ denotes the {\em natural}  inverse operator of $\partial_{xx}$.

The integrations of $I_1(t_n)$ and $I_3(t_n)$ remain the same. For $I_2(t_n)$, firstly we have
\bes
\begin{split}
    I_2(t_n)
    & = -\int_0^\tau \left[\fe^{\rho\p_x}\left(\partial_{xx}^{-1}|\fe^{-\rho \alpha \p_x}\Phi(t_n)|^2\right)\fe^{-\rho\p_x}\Pi_+ + \fe^{-\rho\p_x}\left(\partial_{xx}^{-1}|\fe^{-\rho \alpha \p_x}\Phi(t_n)|^2\right)\fe^{\rho\p_x}\Pi_-\right]\Phi(t_n)\,d\rho,
\end{split}
\ees
and
$$|\fe^{-\rho \alpha \p_x}\Phi(t_n)|^2=
\frac{1}{2}\left[(\fe^{\rho\partial_x}\phi_-(t_n))(\fe^{\rho\partial_x}\overline{
\phi_-}(t_n))+(\fe^{-\rho\partial_x}\phi_+(t_n))(\fe^{-\rho\partial_x}\overline{
\phi_+}(t_n))\right].$$
Let us consider two general scalar functions $\psi=\psi(x),\,\phi=\phi(x)$, and we have
\begin{align*}
 & \int_0^\tau\fe^{\rho\partial_x}\left(\partial_{xx}^{-1}[
(\fe^{\rho\partial_x}\psi)
(\fe^{\rho\partial_x}\overline{\psi})]\right)\fe^{-\rho\partial_x}\phi\, d\rho\\
=& \int_0^\tau\sum_{l\in\bZ}\fe^{ilx}\fe^{il\rho}
 \sum_{\begin{array}{cc}&l_1,l_2,l_3\in\bZ\\&l_1+l_2+l_3=l\\&l_1+l_2\neq0
 \end{array}}\frac{-1}{(l_1+l_2)^2}
\fe^{il_1\rho}\widehat{\psi}_{l_1}\fe^{il_2\rho}\widehat{(\overline{\psi})}_{l_2}
\fe^{-il_3\rho}\widehat{\phi}_{l_3}\, d\rho\\
=& \sum_{l\in\bZ}\fe^{ilx}
 \sum_{\begin{array}{cc}&l_1,l_2,l_3\in\bZ\\&l_1+l_2+l_3=l\\&l_1+l_2\neq0\end{array}}
\int_0^\tau
\frac{-\fe^{2i(l_1+l_2)\rho}}{(l_1+l_2)^2}d\rho\,
\widehat{\psi}_{l_1}\widehat{(\overline{\psi})}_{l_2}\widehat{\phi}_{l_3}
=\tau\phi\varphi_1(2\tau\partial_x)\partial_{xx}^{-1}|\psi|^2,
\end{align*}
and
\begin{align*}
 & \int_0^\tau\fe^{\rho\partial_x}\left(\partial_{xx}^{-1}[
(\fe^{-\rho\partial_x}\psi)
(\fe^{-\rho\partial_x}\overline{\psi})]\right)\fe^{-\rho\partial_x}\phi\, d\rho\\
=& \int_0^\tau\sum_{l\in\bZ}\fe^{ilx}\fe^{il\rho}
 \sum_{\begin{array}{cc}&l_1,l_2,l_3\in\bZ\\&l_1+l_2+l_3=l\\&l_1+l_2\neq0
 \end{array}}\frac{-1}{(l_1+l_2)^2}
\fe^{-il_1\rho}\widehat{\psi}_{l_1}\fe^{-il_2\rho}\widehat{(\overline{\psi})}_{l_2}
\fe^{-il_3\rho}\widehat{\phi}_{l_3}\, d\rho\\
= &\sum_{l\in\bZ}\fe^{ilx}
 \sum_{\begin{array}{cc}&l_1,l_2,l_3\in\bZ\\&l_1+l_2+l_3=l\\&l_1+l_2\neq0\end{array}}
\frac{-\tau}{(l_1+l_2)^2}
\widehat{\psi}_{l_1}\widehat{(\overline{\psi})}_{l_2}\widehat{\phi}_{l_3}
=\tau\phi\partial_{xx}^{-1}|\psi|^2.
\end{align*}
Similarly,
\begin{align*}
 & \int_0^\tau\fe^{-\rho\partial_x}\left(\partial_{xx}^{-1}[
(\fe^{\rho\partial_x}\psi)
(\fe^{\rho\partial_x}\overline{\psi})]\right)\fe^{\rho\partial_x}\phi\, d\rho
=\tau\phi\partial_{xx}^{-1}|\psi|^2,\\
&\int_0^\tau\fe^{-\rho\partial_x}\left(\partial_{xx}^{-1}[
(\fe^{-\rho\partial_x}\psi)
(\fe^{-\rho\partial_x}\overline{\psi})]\right)\fe^{\rho\partial_x}\phi\, d\rho
=\tau\phi\varphi_1(-2\tau\partial_x)\partial_{xx}^{-1}|\psi|^2.
\end{align*}
Hence, we have for $I_2(t_n)$:
\be\label{eq:I2dp}
\begin{split}
    I_2(t_n) = & -\frac{\tau}{2}\Pi_+\Phi(t_n)\partial_{xx}^{-1}\left[
    \varphi_1(2\tau\p_x)|\phi_-(t_n)|^2 + |\phi_+(t_n)|^2 \right] \\
    &  -\frac{\tau}{2}\Pi_-\Phi(t_n)\partial_{xx}^{-1}\left[
    \varphi_1(-2\tau\p_x)|\phi_+(t_n)|^2 + |\phi_-(t_n)|^2 \right].
\end{split}
\ee

Combining (\ref{sec3 eq1}), (\ref{eq:I1}), (\ref{eq:I3}) and (\ref{eq:I2dp}), the detailed \textbf{scheme of ULI for solving the Dirac-Poisson} system (\ref{dirac-poisson}) reads  the same as (\ref{ULI}):
\begin{align} \label{ULIdp}
\Phi^{n+1}=&\fe^{-\tau\alpha\partial_x}\Phi^n-i\fe^{-\tau\alpha\partial_x}(I_1^n+I_2^n+I_3^n)
=:\Theta_{\mathrm{DP}}(\Phi^{n}),
\quad n\geq0,
 \end{align}
where $I_1^n,I_3^n$ are defined same as in (\ref{ULI-I13}), but $I_2^n$ is replaced by
 \begin{align}\label{I2dp}
 I_2^n= -\frac{\tau}{4}
    \begin{pmatrix}\phi_+^n\partial_{xx}^{-1}\left[
    \varphi_1(2\tau\p_x)|\phi_-^n|^2 + |\phi_+^n|^2 \right] +\phi^n_-\partial_{xx}^{-1}\left[
    \varphi_1(-2\tau\p_x)|\phi_+^n|^2 + |\phi_-^n|^2 \right]\\
    \phi_+^n\partial_{xx}^{-1}\left[
    \varphi_1(2\tau\p_x)|\phi_-^n|^2 + |\phi_+^n|^2 \right] -\phi^n_-\partial_{xx}^{-1}\left[
    \varphi_1(-2\tau\p_x)|\phi_+^n|^2 + |\phi_-^n|^2 \right]\end{pmatrix}
    .
 \end{align}

 The above ULI scheme, i.e., (\ref{ULIdp}) with (\ref{I2dp}), is again fully explicit with computational costs of $O(N\log N)$ at each time level if $N$ Fourier modes are used for spatial discretization.

\subsection{Convergence result}
For the proposed ULI scheme, i.e., (\ref{ULI}) with (\ref{ULI-I13}) for solving the NDE (\ref{dirac trun}) with external $V$ or (\ref{ULIdp}) with (\ref{I2dp}) for solving the Dirac-Poisson system (\ref{dirac-poisson}), we have the following convergence result  as the\textbf{ main theorem} of the paper.

\begin{theorem}\label{main}(Convergence of ULI) Let $\Phi^n$ denote the numerical solution of the ULI scheme (\ref{ULIdp}) for solving the Dirac-Poisson system (\ref{dirac-poisson}) (respectively, of  the ULI scheme (\ref{ULI})  for solving (\ref{dirac trun}) with given $V$). Let $r>\frac{1}{2}$ and $\Phi\in L^\infty((0,T);H^{r})$ for some $T>0$. Then there exist constants $\tau_0,\,C>0$ depending on $ \Vert \Phi\Vert_{L^\infty((0,T);H^r)}$ and $T$, such that for all $0<\tau\leq\tau_0$ and $0\leq t_n\leq T$, we have
 $$\|\Phi(t_n)-\Phi^n\|_{r}\leq C\tau.$$
\end{theorem}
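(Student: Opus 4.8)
The plan is to run the classical one-step convergence machinery: a local (consistency) error of size $O(\tau^2)$ combined with a stability estimate for the numerical flow, closed by a discrete Gronwall argument in the spirit of Lady Windermere's fan. Writing the scheme as $\Phi^{n+1}=\Theta(\Phi^n)$ with $\Theta\in\{\Theta_{\mathrm{ext}},\Theta_{\mathrm{DP}}\}$ and setting $e^n:=\Phi(t_n)-\Phi^n$, I split
\begin{equation*}
e^{n+1}=\big[\Phi(t_{n+1})-\Theta(\Phi(t_n))\big]+\big[\Theta(\Phi(t_n))-\Theta(\Phi^n)\big],
\end{equation*}
and aim to show $\|\Phi(t_{n+1})-\Theta(\Phi(t_n))\|_r\le C\tau^2$ (local error) and $\|\Theta(\Psi)-\Theta(\tilde\Psi)\|_r\le(1+C\tau)\|\Psi-\tilde\Psi\|_r$ on $H^r$-balls (stability), both with $C$ depending only on $\|\Phi\|_{L^\infty((0,T);H^r)}$. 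These combine to $\|e^{n+1}\|_r\le(1+C\tau)\|e^n\|_r+C\tau^2$, whence $\|e^n\|_r\le C\tau$ for $t_n\le T$.

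For the local error, the key observation is that, by the exact integration of $I_1,I_2,I_3$ in Fourier space, $\Theta(\Phi(t_n))$ is precisely the factored Duhamel representation of $\Phi(t_{n+1})$ with the only modification that $\Phi(t_n+\rho)$ in the integrand is replaced by its kinetic flow $\fe^{-\rho\alpha\p_x}\Phi(t_n)$. Hence, with $\mathcal{G}(\Psi):=\beta\Psi+V\Psi+F(\Psi)$ (where $V=V_e$ in the external case and $V=-\p_{xx}^{-1}|\Psi|^2$ in the Dirac-Poisson case),
\begin{equation*}
\Phi(t_{n+1})-\Theta(\Phi(t_n))=-i\fe^{-\tau\alpha\p_x}\int_0^\tau\fe^{\rho\alpha\p_x}\big[\mathcal{G}(\Phi(t_n+\rho))-\mathcal{G}(\fe^{-\rho\alpha\p_x}\Phi(t_n))\big]\,d\rho.
\end{equation*}
Three facts keep this estimate free of derivative loss: (i) $\alpha\p_x$ is skew-adjoint and commutes with $\p_x$, so $\fe^{\pm\rho\alpha\p_x}$ is an isometry on every $H^r$ and costs no derivatives; (ii) $H^r(\bT)$ is a Banach algebra for $r>\tfrac12$, so the multiplication $V\cdot$ and the cubic $F$ are locally Lipschitz from $H^r$ to $H^r$ (in the Poisson case $\p_{xx}^{-1}$ even gains two derivatives, so $V$ is controlled in $H^r$ by $\|\Psi\|_r^2$); and (iii) the defining approximation (\ref{app1}) holds in $H^r$ with a constant depending only on $\sup_{[t_n,t_{n+1}]}\|\Phi\|_r$, since $\Phi(t_n+\rho)-\fe^{-\rho\alpha\p_x}\Phi(t_n)=-i\int_0^\rho\fe^{-(\rho-\sigma)\alpha\p_x}\mathcal{G}(\Phi(t_n+\sigma))\,d\sigma$ is bounded by $C\rho(\|\Phi\|_r+\|\Phi\|_r^3)$. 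Thus the integrand above is $O(\tau)$ in $H^r$ and the $\rho$-integration produces the desired $O(\tau^2)$.

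For stability I use that $\fe^{-\tau\alpha\p_x}$ is an $H^r$-isometry and that each $I_j^n$ is a map of $H^r$-operator size $O(\tau)$: the $\varphi_1$-multipliers are uniformly bounded on $H^r$ since on the $l$-th Fourier mode $|\varphi_1(i\theta l)|=|\fe^{i\theta l}-1|/|\theta l|\le1$, so $I_1^n$ is linear of norm $O(\tau)$, while $I_2^n$ and $I_3^n$ are (at most cubic) and locally Lipschitz with constant $O(\tau)$ on any $H^r$-ball, again via the algebra property and, for the Poisson term, the smoothing of $\p_{xx}^{-1}$. Consequently $\Theta$ is a $(1+C\tau)$-Lipschitz map on bounded subsets of $H^r$. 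A routine induction shows that for $\tau\le\tau_0$ the numerical solution $\Phi^n$ remains in a fixed $H^r$-ball of radius $\approx\|\Phi\|_{L^\infty((0,T);H^r)}+1$, so the Lipschitz constants apply uniformly along the trajectory, and the discrete Gronwall inequality then yields $\|e^n\|_r\le C\tau$.

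I expect the main obstacle to be establishing the local error as a genuine $O(\tau^2)$ in $H^r$ with a constant depending only on $\|\Phi\|_{L^\infty((0,T);H^r)}$ and no higher Sobolev norm; concretely, this amounts to verifying that every operator entering the scheme --- the isometry group $\fe^{\pm\rho\alpha\p_x}$, the bounded multipliers $\varphi_1(\pm2\tau\p_x)$ and $\varphi_1(\pm\tau\p_x)$, the smoothing $\p_{xx}^{-1}$, and the bilinear and cubic products --- maps $H^r$ to $H^r$ boundedly for $r>\tfrac12$, so that no spatial derivative of the solution is ever needed. Closing the bootstrap for the $H^r$-boundedness of $\Phi^n$ is the remaining technical point, but it is routine once consistency and stability are in place.
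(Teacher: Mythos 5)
Your proposal is correct and follows essentially the same route as the paper's proof: the same splitting into local error plus stability, the same observation that exact integration of $I_1,I_2,I_3$ reduces the local error to the Duhamel integral of $\mathcal{G}(\Phi(t_n+\rho))-\mathcal{G}(\fe^{-\rho\alpha\p_x}\Phi(t_n))$ controlled by the $H^r$-isometry of $\fe^{s\alpha\p_x}$, the algebra property of $H^r$ for $r>\tfrac12$ and a second application of Duhamel, and the same Lady Windermere's fan induction with bounded $\varphi_1$-multipliers for the $(1+C\tau)$-Lipschitz stability and Gronwall closure. No gaps to report.
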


\begin{proof} We shall prove the case for Dirac-Poisson system, and the proof for the other case is similar which is omitted here for brevity.

\textbf{Local error}. Define $\xi^n:=\Phi(t_{n+1})-\Theta_{\mathrm{DP}}(\Phi(t_n))$ as the local truncation error of the scheme at some $t_n$ for $n\geq0$. Let $s=\tau$ in (\ref{dumammel-DP}) and subtract it from $\Theta_{\mathrm{DP}}(\Phi(t_n))$, noting the definition of $I_1(t_n),\,I_2(t_n),\,I_3(t_n)$ in (\ref{I13}) and (\ref{I2dp0}) which are exactly evaluated in the scheme $\Theta_{\mathrm{DP}}$, we get
  \begin{align*}
    \xi^n=&-i\int_0^\tau\fe^{-(s-\rho)\alpha\partial_x}
 \Big[\beta\Phi(t_n+\rho)-\beta\fe^{-\rho\alpha\partial_x}\Phi(t_n)-\partial_{xx}^{-1}
 |\Phi(t_n+\rho)|^2\Phi(t_n+\rho)\\
  &-(-\partial_{xx}^{-1})|\fe^{-\rho\alpha\partial_x}\Phi(t_n)|^2
  \fe^{-\rho\alpha\partial_x}\Phi(t_n)  + F(\Phi(t_n+\rho))
  -F(\fe^{-\rho\alpha\partial_x}\Phi(t_n))\Big] d\rho.
  \end{align*}
  Based on (\ref{eq:3-3}), it is direct to verify that
  $\fe^{s\alpha\partial_x}$ is isometric in $H^r$ space, i.e.,
  $$|\widehat{\left(\fe^{s\alpha\partial_x}\Psi\right)}_l|^2=|\widehat{\Psi}_l|^2,
  \quad l\in\bZ,\ s\in\bR,\quad \mbox{where}\quad
  \widehat{\Psi}_l=\int_\bT \Psi(x)\fe^{-ilx}dx,$$
  for a general function $\Psi(x):\bT\to\bC^2$.
  Then thanks to the triangle inequality and bilinear estimates since $r>\frac{1}{2}$, we have
  \begin{align*}
    \|\xi^n\|_{r}\leq&\int_0^\tau
 \|\Phi(t_n+\rho)-\fe^{-\rho\alpha\partial_x}\Phi(t_n)\|_{r}d\rho\\
 &+ \int_0^\tau\|\partial_{xx}^{-1}
 (|\Phi(t_n+\rho)|^2)\Phi(t_n+\rho)
  -\partial_{xx}^{-1}(|\fe^{-\rho\alpha\partial_x}\Phi(t_n)|^2)
  \fe^{-\rho\alpha\partial_x}\Phi(t_n)\|_{r}d\rho\\
  &  + \int_0^\tau\|F(\Phi(t_n+\rho))
  -F(\fe^{-\rho\alpha\partial_x}\Phi(t_n))\|_{r} d\rho\\
  \leq& C\int_0^\tau
 \|\Phi(t_n+\rho)-\fe^{-\rho\alpha\partial_x}\Phi(t_n)\|_{r}d\rho.\end{align*}
 By (\ref{dumammel-DP}) again, we see that
 \begin{align*}
  &\|\Phi(t_n+\rho)-\fe^{-\rho\alpha\partial_x}\Phi(t_n)\|_{r}\\
  \leq& \int_0^\rho\left[\|\Phi(t_n+\sigma)\|_{r}
  + \|\partial_{xx}^{-1}
 (|\Phi(t_n+\sigma)|^2)\Phi(t_n+\sigma)\|_{r}
  +\|F(\Phi(t_n+\sigma))\|_{r}\right]d\sigma\\
  \leq&C\int_0^\rho\|\Phi(t_n+\sigma)\|_{r}d\sigma\leq C\rho\|\Phi\|_{L^\infty((0,T);H^r)},
 \end{align*}
and therefore
 \begin{align}\label{locaerror1}
\|\xi^n\|_{r}\leq&C\int_0^\tau\rho d\rho\|\Phi\|_{L^\infty((0,T);H^r)}\leq C\tau^2.
  \end{align}

We carry out an induction proof on the boundedness of the numerical solution (see also the \emph{Lady Windermere's fan} argument e.g. \cite{Faou,Lubich,lownls}). Assume that for some $0\leq m<T/\tau$,
$$\|\Phi^n\|_r\leq \|\Phi\|_{L^\infty((0,T);H^r)}+1,\quad 0\leq n\leq m,$$
which is obviously true for $m=0$. Now we justify it for $n=m+1$.

  \textbf{Stability \& convergence}. Taking the difference between the scheme (\ref{ULIdp}) and $\Phi(t_{n+1})=\Theta_{DP}(\Phi(t_n))+\xi^n$ and denoting $e^n=\Phi(t_n)-\Phi^n$ for $n\geq0$, we get
  \begin{align*}
    e^{n+1}=\fe^{-\tau\alpha\partial_x}e^n-i\fe^{-\tau\alpha\partial_x}
    \left[I_1(t_n)-I_1^n+I_2(t_n)-I_2^n+I_3(t_n)-I_3^n\right]+\xi^n,\quad 0\leq n\leq m.
  \end{align*}
  Taking the $H^r$-norm on both sides of the above equation and by triangle inequality,
  we get
  \begin{align}\label{proof1}
    \|e^{n+1}\|_{r}\leq\|e^n\|_r+
    \|I_1(t_n)-I_1^n\|_r+\|I_2(t_n)-I_2^n\|_r+
    \|I_3(t_n)-I_3^n\|_r+\|\xi^n\|_r,\quad 0\leq n\leq m.
  \end{align}
  Noting in $I_1^n,I_2^n,I_3^n$, we have
  $$\|\varphi_1(s\partial_x)\psi\|_r\leq C\|\psi\|_r,\quad s\in\bR,$$
 for some $\psi\in H^r(\bT)$. Then by direct computing and the bilinear estimates, and thanks to the fact that $\Phi^n\in H^r$ for $n\leq m$, we have
 $$\|I_1(t_n)-I_1^n\|_r+\|I_2(t_n)-I_2^n\|_r+
    \|I_3(t_n)-I_3^n\|_r\leq \tau C\|e^n\|_r,\quad 0\leq n\leq m.$$
    Therefore, by plugging the above inequality and the local truncation error (\ref{locaerror1}) into (\ref{proof1}), we get
 \begin{align*}
    \|e^{n+1}\|_{r}\leq\|e^n\|_r+\tau C
    \|e^n\|_r+C\tau^2,\quad 0\leq n\leq m.
  \end{align*}
Then by Gronwall's inequality, we have
\begin{align*}
    \|e^{m+1}\|_{r}\leq C\tau,
  \end{align*}
for some constant $C>0$ dependent on $T$ and norm of $\Phi$ but independent of $m$ or $\tau$. Then there exists some constant $\tau_0>0$ independent of $m$ or $\tau$, such that
  $$\|\Phi^{m+1}\|_r\leq\|e^{m+1}\|_{r}+ \|\Phi\|_{L^\infty((0,T);H^r)}\leq
  \|\Phi\|_{L^\infty((0,T);H^r)}+1,$$
  and induction proof is done.
\end{proof}
\begin{remark}
  We remark that the assumption $r>\frac{1}{2}$ in Theorem \ref{main} is necessary to apply classical bilinear estimates in the error analysis, i.e., we can exploit that $H^r$ is then an algebra. This stability restriction (in principle) can be weakened by using discrete Strichartz-type estimates, see e.g. the recent work on the nonlinear Schr\"{o}dinger equation \cite{lowNLS2}.
\end{remark}

\begin{remark}
  The generalizations of the ULI to higher dimensional Dirac equations are not straightforward. In 2D or 3D, the propagator in the Duhamel's
formula contains spatial differential operators in a nonlinear way, where more efforts are needed to integrate the nonlinearity after the Picard iteration. This difficulty also appears in the counterpart study for multidimensional wave equations and we are going to address it in a forthcoming paper.
\end{remark}

\section{Extension to higher order}\label{sec:4}
In this section, we present second-order ULI schemes for solving the NDEs. Again, we begin with the external electrical field case (\ref{dirac trun}) and then consider the Dirac-Poisson system (\ref{dirac-poisson}).

\subsection{NDE with external field} Assume $V=V_e(x)$ is given.
In principle, the ULI scheme could be extended to arbitrary high order by using  the nested Picard iteration, i.e., use recursively the lower-order scheme for approximating $\Phi(t_n+\rho)$ in the integrand of the Duhamel's formula:
\begin{align}\label{dumammel2}
\Phi(t_{n+1})=\fe^{-\tau\alpha\partial_x}\Phi(t_n)-i\int_0^\tau\fe^{-(\tau-\rho)\alpha\partial_x}
 \left[\mathcal{L}\Phi(t_n+\rho) +\mathcal{N}(\Phi(t_n+\rho))\right] d\rho,
 \end{align}
 where in this case
 $$\mathcal{L}=\beta+V,\quad \mathcal{N}(\Phi)=\lambda (\Phi^*\beta\Phi)\beta\Phi=F(\Phi).$$
 For instance, to get a second-order ULI scheme, one can take
$$\Phi(t_n+\rho)\approx\Theta_{\mathrm{ext}}(\Phi(t_n)),$$
with the mapping $\Theta_{\mathrm{ext}}$ from the first-order scheme (\ref{ULI}), and then carry out the integrations in Fourier frequency space exactly. However, it is not clear how to cope with the  produced pseudo differential operators, especially, in regard of the practical implementation of the scheme. To develop  a  low-regularity second-order scheme which is of  comparable costs to classical methods, i.e., of order $O(N\log{N})$ with $N$ the number of the total Fourier modes, we accept to introduce some truncations that involve first-order spatial derivatives of the solution:
\begin{equation}\label{eqadd1 sec4}\Phi(t_n+\rho)\approx\fe^{-\rho\alpha\partial_x}\Phi(t_n)
-i\rho G(t_n),\quad G(t_n)=\mathcal{L}\Phi(t_n)+\mathcal{N}(\Phi(t_n)).\end{equation}
 The loss of one derivative seems to be acceptable for second-order time convergence, in particular  since we also need some smoothness of the solution for  spatial discretization accuracy. It is not practically meaningful from the efficiency point of view if the convergence order of the method in space is much less than the temporal convergence order.

Plugging the approximation (\ref{eqadd1 sec4}) into (\ref{dumammel2}), we approximate the first part as
\begin{align*}&-i\int_0^\tau\fe^{-(\tau-\rho)\alpha\partial_x}
\mathcal{L}\Phi(t_n+\rho) d\rho\\
\approx&-i\int_0^\tau\fe^{-(\tau-\rho)\alpha\partial_x}
\mathcal{L}\fe^{-\rho\alpha\partial_x}\Phi(t_n)d\rho
-\int_0^\tau\fe^{-(\tau-\rho)\alpha\partial_x}\rho\mathcal{L}
G(t_n) d\rho\\
\approx&-i\int_0^\tau\fe^{-(\tau-\rho)\alpha\partial_x}
\mathcal{L}\fe^{-\rho\alpha\partial_x}\Phi(t_n)d\rho
-\frac{\tau^2}{2}\mathcal{L}
G(t_n)\\
=&- i \fe^{-\tau\alpha\p_x} [I_1(t_n)+I_2(t_n)]-\frac{\tau^2}{2}\mathcal{L}
G(t_n),
 \end{align*}
 where $I_1$ and $I_2$ are defined in (\ref{I13}).
 For the second part, we take the approximation as
 \begin{align*}
 \mathcal{N}(\Phi(t_n+\rho) )&\approx
 \mathcal{N}\left(\fe^{-\rho\alpha\partial_x}\Phi(t_n)
-i\rho G(t_n)\right)\\
&=
 \mathcal{N}(\fe^{-\rho\alpha\partial_x}\Phi(t_n))
-\rho\mathcal{N}'(\fe^{-\rho\alpha\partial_x}\Phi(t_n))(i G(t_n))+O(\rho^2)\\
&=
 \mathcal{N}(\fe^{-\rho\alpha\partial_x}\Phi(t_n))
-\rho\mathcal{N}'(\Phi(t_n))(i G(t_n))+O(\rho^2),
 \end{align*}
 where $\mathcal{N}'$ denotes the G\^{a}teaux's derivative, and we have for $G=(G_1,G_2)^T$,
\begin{align*}
\mathcal{N}'(\Phi(t_n))(iG(t_n))=&i\lambda(|\phi_1(t_n)|^2-|\phi_2(t_n)|^2)\beta G(t_n)\\
&+
2\lambda\mathrm{Im}
\left[\phi_1(t_n)\overline{G_1}(t_n)+\overline{\phi_2}(t_n)G_2(t_n)\right]\beta\Phi(t_n).
\end{align*}
 Therefore, we have
 \begin{align*}&-i\int_0^\tau\fe^{-(\tau-\rho)\alpha\partial_x}
\mathcal{N}(\Phi(t_n+\rho) )d\rho\\
\approx&-i\int_0^\tau\fe^{-(\tau-\rho)\alpha\partial_x}
\mathcal{N}(\fe^{-\rho\alpha\partial_x}\Phi(t_n))d\rho
+i\int_0^\tau\fe^{-(\tau-\rho)\alpha\partial_x}\rho
d\rho\,\mathcal{N}'(\Phi(t_n))(iG(t_n))\\
\approx&-i\int_0^\tau\fe^{-(\tau-\rho)\alpha\partial_x}
\mathcal{N}(\fe^{-\rho\alpha\partial_x}\Phi(t_n))d\rho
+\frac{i\tau^2}{2}\mathcal{N}'(\Phi(t_n))(iG(t_n))\\
=&
-\frac{\lambda\tau^2}{2} (|\phi_1(t_n)|^2-|\phi_2(t_n)|^2)\beta G(t_n)+i\lambda\tau^2
\mathrm{Im}
\left[\phi_1(t_n)\overline{G_1}(t_n)+\overline{\phi_2}(t_n)G_2(t_n)\right]\beta\Phi(t_n)\\
&-i\fe^{-\tau\alpha\partial_x}
I_3(t_n),
\end{align*}
where $I_3$ is defined in (\ref{I13}).
Then the complete second-order approximation to (\ref{dumammel2}) is
\begin{align*}
\Phi(t_{n+1})\approx&
\fe^{-\tau\alpha\partial_x}\Phi(t_n)
-i\fe^{-\tau\alpha\partial_x}\left[I_1(t_n)+I_2(t_n)+I_3(t_n)\right]
-\frac{\tau^2}{2}\mathcal{L}
G(t_n)\\
&-\frac{\lambda\tau^2}{2} (|\phi_1(t_n)|^2-|\phi_2(t_n)|^2)\beta G(t_n)
+i\lambda\tau^2
\mathrm{Im}
\left[\phi_1(t_n)\overline{G_1}(t_n)+\overline{\phi_2}(t_n)G_2(t_n)\right]\beta\Phi(t_n).
 \end{align*}

The detailed \textbf{scheme for the second-order ULI for NDE (\ref{dirac trun})} \textbf{with given $V$} reads: denote  $\Phi^n=(\phi_1^n, \phi_2^n)^T\approx\Phi(t_n)$ for $n\geq0$, let $\Phi^0=\Phi_0$ and then
\begin{align}\label{LI}
\Phi^{n+1}=&
\Theta_{\mathrm{ext}}(\Phi^n)
-\frac{\tau^2}{2}\mathcal{L}G^n
-\frac{\lambda\tau^2}{2} (|\phi_1^n|^2-|\phi_2^n|^2)\beta G^n
+i\lambda\tau^2
\mathrm{Im}
[\phi_1^n\overline{G_1^n}+\overline{\phi_2^n}G_2^n]\beta\Phi^n,\ n\geq0,
 \end{align}
 where
 $$G^n=(G_1^n,G_2^n)^T=\beta\Phi^n+V\Phi^n+F(\Phi^n).$$

\subsection{Dirac-Poisson system} For the Dirac-Poisson system (\ref{dirac-poisson}), the linear and nonlinear operators $\mathcal{L}$ and $\mathcal{N}$ in the Duhamel's formula (\ref{dumammel2}) are redefined  as
$$\mathcal{L}=\beta,\quad \mathcal{N}(\Phi)=-\left(\partial_{xx}^{-1}|\Phi|^2\right)\Phi+
\lambda(|\phi_1|^2-|\phi_2|^2)\beta\Phi,$$
where now
\begin{align*}
\mathcal{N}'(\Phi(t_n))(iG(t_n))=&-\left(\partial_{xx}^{-1}|\Phi(t_n)|^2\right)(iG(t_n))+
i\lambda(|\phi_1(t_n)|^2-|\phi_2(t_n)|^2)\beta G(t_n)\\
&-2\left(\partial_{xx}^{-1}\mathrm{Im}
[\phi_1(t_n)\overline{G_1}(t_n)-\overline{\phi_2}(t_n)G_2(t_n)]\right)\Phi(t_n)\\
&+
2\lambda\mathrm{Im}
[\phi_1(t_n)\overline{G_1}(t_n)+\overline{\phi_2}(t_n)G_2(t_n)]\beta\Phi(t_n).
\end{align*}
Then we have
\begin{align*}&-i\int_0^\tau\fe^{-(\tau-\rho)\alpha\partial_x}
\mathcal{L}\Phi(t_n+\rho) d\rho
\approx- i \fe^{-\tau\alpha\p_x} I_1(t_n)-\frac{\tau^2}{2}\beta
G(t_n),
 \end{align*}
 and
  \begin{align*}&-i\int_0^\tau\fe^{-(\tau-\rho)\alpha\partial_x}
\mathcal{N}(\Phi(t_n+\rho) )d\rho\\
\approx&
-\frac{\lambda\tau^2}{2} (|\phi_1(t_n)|^2-|\phi_2(t_n)|^2)\beta G(t_n)+i\lambda\tau^2
\mathrm{Im}
\left[\phi_1(t_n)\overline{G_1}(t_n)+\overline{\phi_2}(t_n)G_2(t_n)\right]\beta\Phi(t_n)\\
&+\frac{\tau^2}{2}\left(\partial_{xx}^{-1}|\Phi(t_n)|^2\right)G(t_n)
-i\tau^2\left(\partial_{xx}^{-1}\mathrm{Im}
[\phi_1(t_n)\overline{G_1}(t_n)-\overline{\phi_2}(t_n)G_2(t_n)]\right)\Phi(t_n)\\
&-i\fe^{-\tau\alpha\partial_x}\left[I_2(t_n)+I_3(t_n)\right],
\end{align*}
 where $I_1,\,I_3$ are defined in (\ref{I13}) and $I_2$ is defined in (\ref{I2dp0}).
Then the detailed \textbf{scheme for the second-order ULI for Dirac-Poisson system (\ref{dirac-poisson})} reads: let $\Phi^0=\Phi_0$ and then
\begin{align}\label{LIdp}
\Phi^{n+1}=&
\Theta_{\mathrm{DP}}(\Phi^n)
-\frac{\tau^2}{2}\beta G^n
-\frac{\lambda\tau^2}{2} (|\phi_1^n|^2-|\phi_2^n|^2)\beta G^n
+i\lambda\tau^2
\mathrm{Im}
[\phi_1^n\overline{G_1^n}+\overline{\phi_2^n}G_2^n]\beta\Phi^n\nonumber\\
&+\frac{\tau^2}{2}\left(\partial_{xx}^{-1}|\Phi^n|^2\right)G^n
-i\tau^2\left(\partial_{xx}^{-1}\mathrm{Im}
[\phi_1^n\overline{G_1^n}-\overline{\phi_2^n}G_2^n]\right)\Phi^n
=:\Theta_{\mathrm{DP}}^{2nd}(\Phi^n),\quad n\geq0,
 \end{align}
 where
 $$G^n=(G_1^n,G_2^n)^T=\beta\Phi^n-(\partial_{xx}^{-1}|\Phi^n|^2)\Phi^n+F(\Phi^n).$$

The extended ULI schemes (\ref{LI}) and (\ref{LIdp}) for solving respectively (\ref{dirac trun}) and (\ref{dirac-poisson}) are fully explicit and easy to program since they are built based on the first-order ULI schemes. The computational cost of (\ref{LI}) or (\ref{LIdp}) per time level is also $O(N\log N)$ if $N$ Fourier modes are used for spatial discrezation which is as efficient as the standard methods.

\subsection{Convergence result} For the extended ULI schemes (\ref{LI}) and (\ref{LIdp}), we have the following convergence theorem.

 \begin{theorem}\label{main2} (Convergence of extended ULI)
 Let $\Phi^n$ denote the numerical solution  of the second-order ULI scheme (\ref{LIdp}) for solving the Dirac-Poisson system (\ref{dirac-poisson}) (respectively, of (\ref{LI})  for solving (\ref{dirac trun}) with given $V$).  Let $r>\frac{1}{2}$, $\Phi\in L^\infty((0,T);H^{r+1})$ and $\partial_t\Phi\in L^\infty((0,T);H^{r})$ for some $T>0$. Then there exist constants $\tau_0,\,C>0$ depending on $ \Vert \Phi\Vert_{L^\infty((0,T);H^{r+1})}$, $ \Vert \partial_t\Phi\Vert_{L^\infty((0,T);H^{r})}$ and $T$, such that for all $0<\tau\leq\tau_0$ and $0\leq t_n\leq T$, we have
 $$\|\Phi(t_n)-\Phi^n\|_{r}\leq C\tau^2.$$
\end{theorem}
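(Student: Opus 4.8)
The plan is to mirror the two-step architecture of the proof of Theorem~\ref{main} — a local truncation estimate followed by a stability/convergence step closed by a \emph{Lady Windermere's fan} induction — the only genuinely new ingredient being a sharper local error analysis tailored to the second-order truncations made when deriving \eqref{LIdp}. I treat the Dirac-Poisson case; the external-field scheme \eqref{LI} is handled identically. Throughout I write $e^n=\Phi(t_n)-\Phi^n$ and $\xi^n:=\Phi(t_{n+1})-\Theta_{\mathrm{DP}}^{2nd}(\Phi(t_n))$, and the decisive tool is the smoothing-type bound
$$\|\fe^{-s\alpha\p_x}\psi-\psi\|_r\leq C|s|\,\|\psi\|_{r+1},$$
which follows from \eqref{eq:3-3} since $|\fe^{\mp isl}-1|\leq|sl|$, and which costs exactly one spatial derivative.

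\textbf{Local error.} I would first quantify the second-order approximation \eqref{eqadd1 sec4}. Writing $\tilde G(t):=\mathcal{L}\Phi(t)+\mathcal{N}(\Phi(t))$ so that $G(t_n)=\tilde G(t_n)$, the exact Duhamel formula \eqref{dumammel2} gives
$$\Phi(t_n+\rho)-\fe^{-\rho\alpha\p_x}\Phi(t_n)+i\rho G(t_n)=-i\int_0^\rho\left[\fe^{-(\rho-\sigma)\alpha\p_x}\tilde G(t_n+\sigma)-\tilde G(t_n)\right]d\sigma.$$
Splitting the integrand into $[\fe^{-(\rho-\sigma)\alpha\p_x}-Id]\tilde G(t_n+\sigma)$ and $\tilde G(t_n+\sigma)-\tilde G(t_n)$, I bound the first by the smoothing estimate (this needs $\tilde G\in H^{r+1}$, hence $\Phi\in H^{r+1}$, since $\mathcal{L}=\beta$ is a bounded matrix and $\mathcal{N}$ only involves the smoothing operator $\p_{xx}^{-1}$ and algebraic nonlinearities) and the second by $\int_0^\sigma\|\p_t\tilde G\|_r$ (this needs only $\p_t\Phi\in H^r$, since $\p_t\tilde G$ is linear in $\p_t\Phi$ with coefficients controlled by the algebra property of $H^r$). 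Both contribute $O(\rho^2)$, yielding $\|\Phi(t_n+\rho)-\fe^{-\rho\alpha\p_x}\Phi(t_n)+i\rho G(t_n)\|_r\leq C\rho^2$ with $C$ depending only on $\|\Phi\|_{H^{r+1}}$ and $\|\p_t\Phi\|_{H^r}$.

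\textbf{Propagation, stability, and conclusion.} With this estimate I would check termwise that every discrepancy between \eqref{dumammel2} and \eqref{LIdp} is $O(\rho^2)$ at the integrand level: substituting \eqref{eqadd1 sec4} into the linear and nonlinear parts (using Lipschitz continuity of $\mathcal{N}$ on $H^r$-balls), the first-order Taylor remainder of $\mathcal{N}$, the frozen-coefficient replacement $\mathcal{N}'(\fe^{-\rho\alpha\p_x}\Phi(t_n))\to\mathcal{N}'(\Phi(t_n))$ (another application of the smoothing bound), and the replacements $\fe^{-(\tau-\rho)\alpha\p_x}\to Id$ in the $\rho$-weighted integrals that produce the explicit $\tfrac{\tau^2}{2}$ corrections (one more derivative loss); since $I_1,I_2,I_3$ are integrated exactly, these are the only sources of error. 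Integrating over $[0,\tau]$ and using the isometry of $\fe^{-(\tau-\rho)\alpha\p_x}$ gives $\|\xi^n\|_r\leq C\tau^3$. For stability I reuse Theorem~\ref{main}: the embedded map obeys $\|\Theta_{\mathrm{DP}}(\Phi(t_n))-\Theta_{\mathrm{DP}}(\Phi^n)\|_r\leq(1+C\tau)\|e^n\|_r$ via \eqref{eq:3-3} and $\|I_j(t_n)-I_j^n\|_r\leq C\tau\|e^n\|_r$, while the additional $O(\tau^2)$ terms in \eqref{LIdp} (products of $\Phi$, $G$, and $\p_{xx}^{-1}$-smoothed factors) are Lipschitz on bounded sets and contribute only $C\tau^2\|e^n\|_r$; hence $\|\Theta_{\mathrm{DP}}^{2nd}(\Phi(t_n))-\Theta_{\mathrm{DP}}^{2nd}(\Phi^n)\|_r\leq(1+C\tau)\|e^n\|_r$. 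Combining yields $\|e^{n+1}\|_r\leq(1+C\tau)\|e^n\|_r+C\tau^3$, and discrete Gronwall over $n\leq T/\tau$ gives $\|e^n\|_r\leq C\tau^2$. The boundedness $\|\Phi^n\|_r\leq\|\Phi\|_{L^\infty((0,T);H^r)}+1$ used to close the nonlinear estimates is maintained by the same induction as in Theorem~\ref{main}, since $C\tau^2\leq1$ for $\tau\leq\tau_0$.

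\textbf{Main obstacle.} The hard part is the bookkeeping in the local error: one must verify that \emph{every} truncation loses at most one spatial derivative, so that the aggregate constant depends only on $\|\Phi\|_{H^{r+1}}$ and $\|\p_t\Phi\|_{H^r}$ and never on $H^{r+2}$. The smoothing estimate $\|\fe^{-s\alpha\p_x}\psi-\psi\|_r\leq C|s|\|\psi\|_{r+1}$ is precisely what caps the loss at one derivative, and the delicate point is that the time-increment $\tilde G(t_n+\sigma)-\tilde G(t_n)$ can be absorbed by $\p_t\Phi\in H^r$ rather than by a second spatial derivative — this is exactly what makes the stated $H^{r+1}$/$\p_t\Phi\in H^r$ hypothesis sufficient for second-order convergence.
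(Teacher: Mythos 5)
Your proposal is correct and follows essentially the same route as the paper: the paper's local-error analysis introduces exactly your decomposition (its $\xi_1^n(\rho)$ is your quantification of \eqref{eqadd1 sec4}, split into a time-increment term controlled by $\partial_t\Phi\in H^r$ and a semigroup-increment term written as an integral of $\alpha\partial_x G$, which is your one-derivative smoothing bound), its $\xi_2^n$ and $\xi_3^n$ are your Taylor remainder of $\mathcal{N}$ and your frozen-coefficient replacement, and the $\fe^{-(\tau-\rho)\alpha\p_x}\to Id$ substitutions producing the $\tfrac{\tau^2}{2}$ corrections are bounded the same way. The stability/induction step is likewise identical (the paper simply refers back to the proof of Theorem~\ref{main}), so no further comparison is needed.
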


\begin{proof}
\textbf{Local error}. Define $\xi^n=\Phi(t_{n+1})-\Theta_{\mathrm{DP}}^{2nd}(\Phi(t_n))$ as the local truncation error of the scheme (\ref{LIdp}) at some $t_n$ for $n\geq0$.
Firstly, we denote
\begin{equation}\label{proof2eq1}\xi_1^n(\rho)=\Phi(t_n+\rho)-\fe^{-\rho\alpha\partial_x}\Phi(t_n)
+i\rho G(t_n),\end{equation}
which by the Duhamel's formula and Taylor expansion becomes
\begin{align*}\xi_1^n(\rho)=&
-i\int_0^\rho\fe^{-(\rho-\sigma)\alpha\partial_x}
 G(t_n+\sigma) d\sigma+i\rho G(t_n)\\
=&i\rho G(t_n)-i\int_0^\rho G(t_n+\sigma)d\sigma-i\int_0^\rho\rho\int_1^{\sigma/\rho}
\fe^{-\rho(1-\kappa)\alpha\partial_x}\alpha\partial_x G(t_n+\sigma)d\kappa d\sigma\\
=&-i\int_0^\rho\sigma\int_0^1\partial_tG(t_n+\sigma \kappa)d\kappa d\sigma-i\int_0^\rho\rho\int_1^{\sigma/\rho}
\fe^{-\rho(1-\kappa)\alpha\partial_x}\alpha\partial_x G(t_n+\sigma)d\kappa d\sigma,
 \end{align*}
 and so we have
 $$\|\xi_1^n(\rho)\|_r\leq C\tau^2\left(\|\partial_t\Phi\|_{L^\infty((0,T),H^r)}
 +\|\Phi\|_{L^\infty((0,T),H^{r+1})}\right)\leq C\tau^2,\quad 0\leq \rho\leq\tau. $$
Also by Taylor expansion,
 \begin{align}
 \mathcal{N}\left(\Phi(t_n+\rho)\right)=&\mathcal{N}\left(\fe^{-\rho\alpha\partial_x}\Phi(t_n)-i\rho
G(t_n)+\xi_1^n(\rho)\right)\nonumber\\
=&\mathcal{N}\left(\fe^{-\rho\alpha\partial_x}\Phi(t_n)-i\rho
G(t_n)\right)+\int_0^1\mathcal{N}'\left(\fe^{-\rho\alpha\partial_x}\Phi(t_n)-i\rho
G(t_n)+s\xi_1^n(\rho)\right)\xi_1^n(\rho)ds\nonumber\\
=&\mathcal{N}\left(\fe^{-\rho\alpha\partial_x}\Phi(t_n)\right)
-\rho\mathcal{N}'\left(\fe^{-\rho\alpha\partial_x}\Phi(t_n)\right)(iG(t_n))+\xi_2^n(\rho),
\label{proof2eq2}
\end{align}
where we denote
\begin{align*}
  \xi_2^n(\rho)=&\rho^2\int_0^1\mathcal{N}''\left(\fe^{-\rho\alpha\partial_x}\Phi(t_n)-is\rho
G(t_n)\right)(-iG(t_n),-iG(t_n))(1-s)ds\\
&+\int_0^1\mathcal{N}'\left(\fe^{-\rho\alpha\partial_x}\Phi(t_n)-i\rho
G(t_n)+s\xi_1^n(\rho)\right)\xi_1^n(\rho)ds.
\end{align*}
Then it is direct to see that
$$\|\xi_2^n(\rho)\|_r\leq C\tau^2\|G(t_n)\|_r+C\|\xi_1^n(\rho)\|_r
\leq C\tau^2,\quad 0\leq\rho\leq\tau. $$
By plugging (\ref{proof2eq1}) and (\ref{proof2eq2}) into (\ref{dumammel2}), we find
\begin{align*}
\Phi(t_{n+1})=&\fe^{-\tau\alpha\partial_x}\Phi(t_n)
 -i\int_0^\tau\fe^{-(\tau-\rho)\alpha\partial_x}
\mathcal{L}\left[\fe^{-\rho\alpha\partial_x}\Phi(t_n)-i\rho
G(t_n)+\xi_1^n(\rho)\right]d\rho\\
&-i\int_0^\tau\fe^{-(\tau-\rho)\alpha\partial_x}
\mathcal{N}\left(\fe^{-\rho\alpha\partial_x}\Phi(t_n)-i\rho
G(t_n)+\xi_1^n(\rho)\right)d\rho\\
=&\fe^{-\tau\alpha\partial_x}\Phi(t_n)
 -i\int_0^\tau\fe^{-(\tau-\rho)\alpha\partial_x}
\mathcal{L}\fe^{-\rho\alpha\partial_x}\Phi(t_n)d\rho
-\int_0^\tau\fe^{-(\tau-\rho)\alpha\partial_x}
\mathcal{L}\rho
G(t_n)d\rho\\
&-i\int_0^\tau\fe^{-(\tau-\rho)\alpha\partial_x}
\mathcal{N}\left(\fe^{-\rho\alpha\partial_x}\Phi(t_n)\right)d\rho
+i\int_0^\tau\fe^{-(\tau-\rho)\alpha\partial_x}
\rho\mathcal{N}'\left(\fe^{-\rho\alpha\partial_x}\Phi(t_n)\right)(iG(t_n))d\rho\\
&-i\int_0^\tau\fe^{-(\tau-\rho)\alpha\partial_x}\left[\mathcal{L}\xi_1^n(\rho)
+\xi_2^n(\rho)\right]d\rho,
\end{align*}
which by the definition of the local error gives
\begin{align}
 \xi^n=&-i\int_0^\tau\fe^{-(\tau-\rho)\alpha\partial_x}\left[\mathcal{L}\xi_1^n(\rho)
+\xi_2^n(\rho)\right]d\rho-\int_0^\tau\fe^{-(\tau-\rho)\alpha\partial_x}
\mathcal{L}\rho
G(t_n)d\rho+\frac{\tau^2}{2}\mathcal{L}G(t_n)\nonumber\\
&+i\int_0^\tau\fe^{-(\tau-\rho)\alpha\partial_x}
\rho\mathcal{N}'\left(\fe^{-\rho\alpha\partial_x}\Phi(t_n)\right)(iG(t_n))d\rho
-\frac{i\tau^2}{2}\mathcal{N}'(\Phi(t_n))(iG(t_n)).\label{proof2eq3}
\end{align}
Again by Taylor expansion, we have
\begin{align}\label{proof2eq4}
 -\int_0^\tau\fe^{-(\tau-\rho)\alpha\partial_x}
\mathcal{L}\rho
G(t_n)d\rho+\frac{\tau^2}{2}\mathcal{L}G(t_n)=-\int_0^\tau\rho^2
\int_1^{s/\rho}\fe^{-\rho(1-\kappa)\alpha\partial_x}\alpha\mathcal{L}
\partial_xG(t_n)d\kappa d\rho,
\end{align}
and
\begin{align}
 &i\int_0^\tau\fe^{-(\tau-\rho)\alpha\partial_x}
\rho\mathcal{N}'\left(\fe^{-\rho\alpha\partial_x}\Phi(t_n)\right)(iG(t_n))d\rho
-\frac{i\tau^2}{2}\mathcal{N}'(\Phi(t_n))(iG(t_n))\nonumber\\
=&i\int_0^\tau\tau\int_1^{\rho/\tau}
\fe^{-\tau(1-\kappa)\alpha\partial_x}\alpha\partial_x d\kappa\,\rho
\mathcal{N}'\left(\fe^{-\rho\alpha\partial_x}\Phi(t_n)\right)(iG(t_n))d\rho
+i\int_0^\tau
\rho\xi_3^n(\rho)d\rho,\label{proof2eq5}
\end{align}
where
\begin{align*}\xi_3^n(\rho):=&\mathcal{N}'\left(\fe^{-\rho\alpha\partial_x}\Phi(t_n)\right)(iG(t_n))
-\mathcal{N}'(\Phi(t_n))(iG(t_n))\\
=&\mathcal{N}'\left(\Phi(t_n)-\rho\int_0^1
\fe^{-\sigma\rho\alpha\partial_x}\alpha\partial_x\Phi(t_n)d\sigma\right)(iG(t_n))
-\mathcal{N}'(\Phi(t_n))(iG(t_n))\\
=&\int_0^1\mathcal{N}''\left(\Phi(t_n)-s\rho\int_0^1
\fe^{-\sigma\rho\alpha\partial_x}\alpha\partial_x\Phi(t_n)d\sigma\right)(iG(t_n))
\left(-\rho\int_0^1
\fe^{-\sigma\rho\alpha\partial_x}\alpha\partial_x\Phi(t_n)d\sigma\right)ds.
\end{align*}
It is direct to see
$$\|\xi_3^n(\rho)\|_r\leq C\tau\|\Phi\|_{L^\infty((0,T),H^{r+1})}\leq C\tau,\quad 0\leq \rho\leq\tau.$$
By plugging (\ref{proof2eq4}) and (\ref{proof2eq5}) into (\ref{proof2eq3}) and then taking the $H^r$-norm on both sides, we have
\begin{align*}\|\xi^n\|_r\leq&C\int_0^\tau\left[\|\xi_1^n(\rho)\|_r
+\|\xi_2^n(\rho)\|_r\right]d\rho+C\int_0^\tau \rho^2\|\partial_xG(t_n)\|_rd\rho
+C\int_0^\tau\rho\left[\tau\|G(t_n)\|_r+\|\xi_3^n(\rho)\|_r\right]d\rho\\
\leq& C\tau^3,\quad n\geq0.
\end{align*}

The rest of the proof then proceeds by using the induction for the stability and error propagations which is similar to that of Theorem \ref{main}, and it will be omitted  here for brevity.
\end{proof}

For the second-order ULI scheme \eqref{LI} (or \eqref{LIdp}),  we establish $L^2$ error estimates for data in $H^1$ following the line of argumentation given in \cite{Lubich}.

\begin{corollary}\label{cor:L2}
($L^2$ convergence of extended ULI) Let $\Phi^n$ denote the numerical solution of the second-order  ULI scheme (\ref{LIdp}) for solving the Dirac-Poisson system (\ref{dirac-poisson}) (respectively, of (\ref{LI}) for solving NDE (\ref{dirac trun}) with given $V$).  Let  $\Phi\in L^\infty((0,T);H^{1})$ and $\partial_t\Phi\in L^\infty((0,T);L^2)$
for some $T>0$. Then there exist constants $\tau_0,\,C>0$ depending on $\Vert \Phi\Vert_{L^\infty((0,T);H^{1})}$, $\Vert \partial_t\Phi\Vert_{L^\infty((0,T);L^{2})}$ and $T$, such that for all $0<\tau\leq\tau_0$ and $0\leq t_n\leq T$, we have
 $$\|\Phi(t_n)-\Phi^n\|_{L^2}\leq C\tau^2.$$
\end{corollary}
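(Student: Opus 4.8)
The plan is to read this corollary as the endpoint $r=0$ of Theorem \ref{main2}: formally inserting $r=0$ into that statement produces exactly the hypotheses $\Phi\in L^\infty((0,T);H^1)$, $\partial_t\Phi\in L^\infty((0,T);L^2)$ and the conclusion $\|\Phi(t_n)-\Phi^n\|_{L^2}\le C\tau^2$. The only place the proof of Theorem \ref{main2} genuinely used $r>\tfrac12$ is in the bilinear product estimates, where $H^r$ is an algebra. In one space dimension I would replace this by the Sobolev embedding $H^1(\bT)\hookrightarrow L^\infty(\bT)$: whenever a product $fg$ must be measured in $L^2$, I estimate $\|fg\|_{L^2}\le\|f\|_{L^\infty}\|g\|_{L^2}\le C\|f\|_{1}\|g\|_{L^2}$, which keeps every nonlinear term under control as long as one factor is bounded in $H^1$. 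For the exact solution this is guaranteed by hypothesis; for the numerical solution it is not, and supplying an $H^1$ bound on $\Phi^n$ is the crux of the whole argument.

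First I would establish the a priori bound $\|\Phi^n\|_{1}\le M$ uniformly on $0\le t_n\le T$, following the strategy of \cite{Lubich}, by comparing the second-order iteration with the first-order one. Writing $\Theta_{\mathrm{DP}}^{2nd}(\Phi)=\Theta_{\mathrm{DP}}(\Phi)+\tau^2 R(\Phi)$, one checks directly from \eqref{LIdp} that the correction $R$ is \emph{derivative-free} in $\Phi$ (it involves only $\beta$, the smoothing operator $\partial_{xx}^{-1}$, and products of $\Phi$ and $G^n$), so $\|R(\Phi)\|_{1}\le C(\|\Phi\|_{1})$ because $H^1(\bT)$ is an algebra and $\partial_{xx}^{-1}$ gains two derivatives. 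Let $\Psi^n$ be the first-order ULI solution with $\Psi^0=\Phi_0$; Theorem \ref{main} applied with $r=1$ (legitimate since $\Phi\in L^\infty((0,T);H^1)$) gives $\|\Psi^n\|_{1}\le M_1$. Since $\Theta_{\mathrm{DP}}$ is Lipschitz in $H^1$ on bounded sets with constant $1+C\tau$ (this is precisely the $H^1$-stability established inside the proof of Theorem \ref{main}), a Lady Windermere's fan induction yields $\|\Phi^n-\Psi^n\|_{1}\le C\tau$, whence $\|\Phi^n\|_{1}\le M_1+C\tau\le M$ for $\tau\le\tau_0$. This closes the bootstrap and supplies the control $\|\Phi^n\|_{L^\infty}\le C\|\Phi^n\|_{1}\le C$ needed below.

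With both solutions bounded in $H^1$, I would rerun the local error computation of Theorem \ref{main2} with $r=0$. The quantities $\xi_1^n(\rho),\xi_2^n(\rho),\xi_3^n(\rho)$ and the commutator contributions \eqref{proof2eq4}--\eqref{proof2eq5} are treated exactly as before but now in $L^2$: the factors $\partial_t G(t_n)$, $\partial_x G(t_n)$ and $\partial_x\mathcal{N}'(\fe^{-\rho\alpha\partial_x}\Phi(t_n))(iG(t_n))$ are bounded through $\|\partial_t\Phi\|_{L^2}+\|\Phi\|_{1}$ and $\|G\|_{1}\le C(\|\Phi\|_{1})$, using that $\mathcal{L}$ carries no spatial derivative, that $G\in H^1$, and $H^1(\bT)\hookrightarrow L^\infty(\bT)$ for the quadratic and cubic products. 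This gives $\|\xi_1^n(\rho)\|_{L^2},\|\xi_2^n(\rho)\|_{L^2}\le C\tau^2$ and $\|\xi_3^n(\rho)\|_{L^2}\le C\tau$, so after integration in $\rho$ the local truncation error obeys $\|\xi^n\|_{L^2}\le C\tau^3$.

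Finally I would close the global estimate by an $L^2$ stability argument. Subtracting \eqref{LIdp} from $\Phi(t_{n+1})=\Theta_{\mathrm{DP}}^{2nd}(\Phi(t_n))+\xi^n$ and setting $e^n=\Phi(t_n)-\Phi^n$, the linear part is transported by the $H^r$-isometry $\fe^{-\tau\alpha\partial_x}$, while each nonlinear difference $I_2(t_n)-I_2^n$, $I_3(t_n)-I_3^n$ and the differences of the $\tau^2$-corrections are bilinear in $e^n$ against factors quadratic in $\Phi(t_n)$ and $\Phi^n$; by $H^1\hookrightarrow L^\infty$ and the bounds $\|\Phi(t_n)\|_{1},\|\Phi^n\|_{1}\le M$ of the first step, each is bounded by $C\tau\|e^n\|_{L^2}$ (the corrections even by $C\tau^2\|e^n\|_{L^2}$). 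Combined with the local error this yields $\|e^{n+1}\|_{L^2}\le(1+C\tau)\|e^n\|_{L^2}+C\tau^3$, and discrete Gronwall gives $\|e^n\|_{L^2}\le C\tau^2$. The main obstacle is the $H^1$ a priori bound of the first step: the conclusion itself only controls $\Phi^n$ in $L^2$, which is too weak to feed back into the $L^\infty$ estimates on which the nonlinear stability rests, so one cannot run a single self-contained $L^2$ induction as in Theorem \ref{main}; the two-tier comparison with the $H^1$-stable first-order scheme is exactly what breaks this circularity.
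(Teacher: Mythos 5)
Your proposal is correct and follows the same overall strategy as the paper — the Lubich-style two-norm argument: first secure an a priori bound on the numerical solution in a norm strong enough to control $L^\infty$, then rerun the local-error and stability estimates of Theorem \ref{main2} in $L^2$ with products handled via $\|fg\|_{L^2}\le\|f\|_{L^\infty}\|g\|_{L^2}$. The one place you genuinely diverge is the mechanism for that a priori bound. The paper obtains it by showing that the \emph{second-order} scheme itself converges with order one in $H^{\frac12+\varepsilon}$ under the hypotheses of the corollary, which immediately yields $\|\Phi^n\|_{H^{\frac12+\varepsilon}}\le 2C$ and hence the needed $L^\infty$ control. You instead introduce the first-order iterates $\Psi^n$, bound them in $H^1$ via Theorem \ref{main} with $r=1$, and transfer the bound to $\Phi^n$ through the decomposition $\Theta_{\mathrm{DP}}^{2nd}=\Theta_{\mathrm{DP}}+\tau^2R$ with $R$ derivative-free, a Lipschitz estimate for $\Theta_{\mathrm{DP}}$ in $H^1$, and a fan induction. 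Both routes are sound; yours is slightly more roundabout (it requires tracking a second numerical trajectory and verifying that $R$ maps bounded sets of $H^1$ into bounded sets of $H^1$, which you correctly check from \eqref{LIdp}), but it buys a uniform $H^1$ bound on $\Phi^n$ rather than only $H^{\frac12+\varepsilon}$, and it makes explicit why the correction term cannot destroy the first-order bootstrap. Your closing observation — that a self-contained $L^2$ induction fails because the $L^2$ bound on $\Phi^n$ is too weak to feed the bilinear estimates — is exactly the point the paper's (much terser) proof is implicitly addressing.
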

\begin{proof}
The proof follows by combining the proof of Theorem \ref{main2} with the line of argumentation taken in \cite{Lubich}: Under the regularity assumptions of Corollary \ref{cor:L2} we can prove that the second-order scheme converges with order one in the Sobolev space $H^{\frac12 + \varepsilon}$ (for any $0 < \varepsilon <\frac12 $), i.e., the error $e^n = \Phi(t_n)-\Phi^n$ satisfies
\begin{align}\label{eps}
\| e^n\|_{H^{\frac12 + \varepsilon}}\leq C\tau,\quad 0\leq t_n\leq T.
\end{align}
The error estimate \eqref{eps} implies a priori the boundedness of the numerical solution in $H^{\frac12 + \varepsilon}$  for some $\varepsilon>0$ as
\[
 \|\Phi^n\|_{H^{\frac12 + \varepsilon}} \leq \|e^n\|_{H^{\frac12 + \varepsilon}} +  \| \Phi(t_n) \|_{H^{\frac12 + \varepsilon}}  \leq 2C
 \]
 for $\tau_0$ sufficiently small.

 The a priori boundedness of the numerical solution $\Phi^n$ in the stronger norm $\Vert \cdot \Vert_{H^{\frac12 + \varepsilon}}$ then allows us to apply bilinear estimates in the stability argument with the numerical solution measured in $H^{\frac12 + \varepsilon}$ as
 \[
 \Vert \Phi^n e^n \Vert_{L^2} \leq \Vert \Phi^n\Vert_{L^\infty} \Vert e^n \Vert_{L^2} \leq 2C\Vert e^n \Vert_{L^2} .
 \]
Using classical bilinear estimates based on Sobolev embedding theorem with the numerical solution measured in $H^{\frac12 + \varepsilon}$ allow us to prove $L^2$ stability estimates and obtain second-order convergence in $L^2$ for $H^1$ data.
\end{proof}

\section{Numerical results}\label{sec:result}
In this section, we present the numerical results of the proposed ULI schemes for solving the NDE (\ref{dirac trun}) with external $V=V_e$ and for the Dirac-Poisson system (\ref{dirac-poisson}).
We carry out convergence tests of the ULI schemes, i.e.,  the first-order scheme (\ref{ULI}) (respectively, (\ref{ULIdp})) and the second-order scheme (\ref{LI}) (respectively, (\ref{LIdp})). For reasons of ease we call them \textbf{ULI1} and \textbf{ULI2} in the following. We compare their results with the standard numerical methods that are reviewed in Section \ref{sec:2}. To highlight the advantage of the ULI schemes compared to classical discretization techniques, we focus on the temporal discretization error of these numerical methods. The spatial discretizations of the methods here are all made by the Fourier pseudospectral method \cite{ST} with very fine mesh so that the error is negligible compared to the temporal discretization error.

To construct an initial data $\Phi_0(x)$ for the NDE (\ref{dirac trun}) or (\ref{dirac-poisson}) such that
$\Phi_0\in H^{\theta}(\bT)$ for some specified $\theta\geq0$, we adopt the way from  \cite{lownls}. Choose $N>0$ as an even integer and discrete the spatial domain $\bT$ with grid points
$x_j=j\frac{2\pi}{N}$ for $j=0,\ldots,N$.
Take two uniformly distributed random vectors $\mathrm{rand}(N,1)\in[0,1]^N$ and denote
$$\mathcal{U}^N=\mathrm{rand}(N,1)
+i\,\mathrm{rand}(N,1).$$
Then we define
\begin{equation}\label{non-smooth}
\Phi_0(x):=\frac{|\partial_{x,N}|^{-\theta}\mathcal{U}^N}
{\||\partial_{x,N}|^{-\theta}\mathcal{U}^N\|_{L^\infty}},\quad
x\in\bT,
\end{equation}
where the pseudo-differential operator $|\partial_{x,N}|^{-\theta}$ reads: for Fourier modes $k=-N/2,\ldots, N/2-1$,
\begin{equation*}
 \left(|\partial_{x,N}|^{-\theta}\right)
 _k=\left\{\begin{split}
 &|k|^{-\theta}\quad \mbox{if}\ k\neq0,\\
  &0\qquad\ \ \mbox{if}\ k=0.
  \end{split}\right.
\end{equation*}
We fix $\lambda=1$ in the Thirring type nonlinearity and fix the external potential (if considered) as
$$V_e(x)=2\sin(x),\qquad x\in\bT.$$
In our following numerical experiments, $N$ is fixed as $N=2^{15}$ which is large enough to neglect the spatial error, and the reference solutions
are obtained numerically by the Strang splitting method with very small step size, e.g. $\tau=10^{-6}$ and $N=2^{15}$. We solve the NDE (\ref{dirac trun}) with the given $V=V_e$ or the Dirac-Poisson system (\ref{dirac-poisson}) by the numerical methods under different time step $\tau$ and we compute their relative error in computing the solution at $T=1$.

To test the first-order methods, we construct initial data $\Phi_0(x)\in H^{2.4}(\bT)$ as in (\ref{non-smooth}), and the convergence results of ULI1 (\ref{ULI}), FD1 (\ref{SIFD1}), Lie splitting (\ref{TSFP1}) and EI1 (\ref{EI1}) for solving NDE (\ref{dirac trun}) with external $V=V_e$ are presented in Figure  \ref{fig2}. We measure the error in $H^2$-norm so that the chosen initial data fails the convergence requirements of all the standard methods. The corresponding convergence results for solving the Dirac-Poisson system (\ref{dirac-poisson}) are also presented in Figure \ref{fig2}.

\begin{figure}[t!]
$$\begin{array}{cc}
\psfig{figure=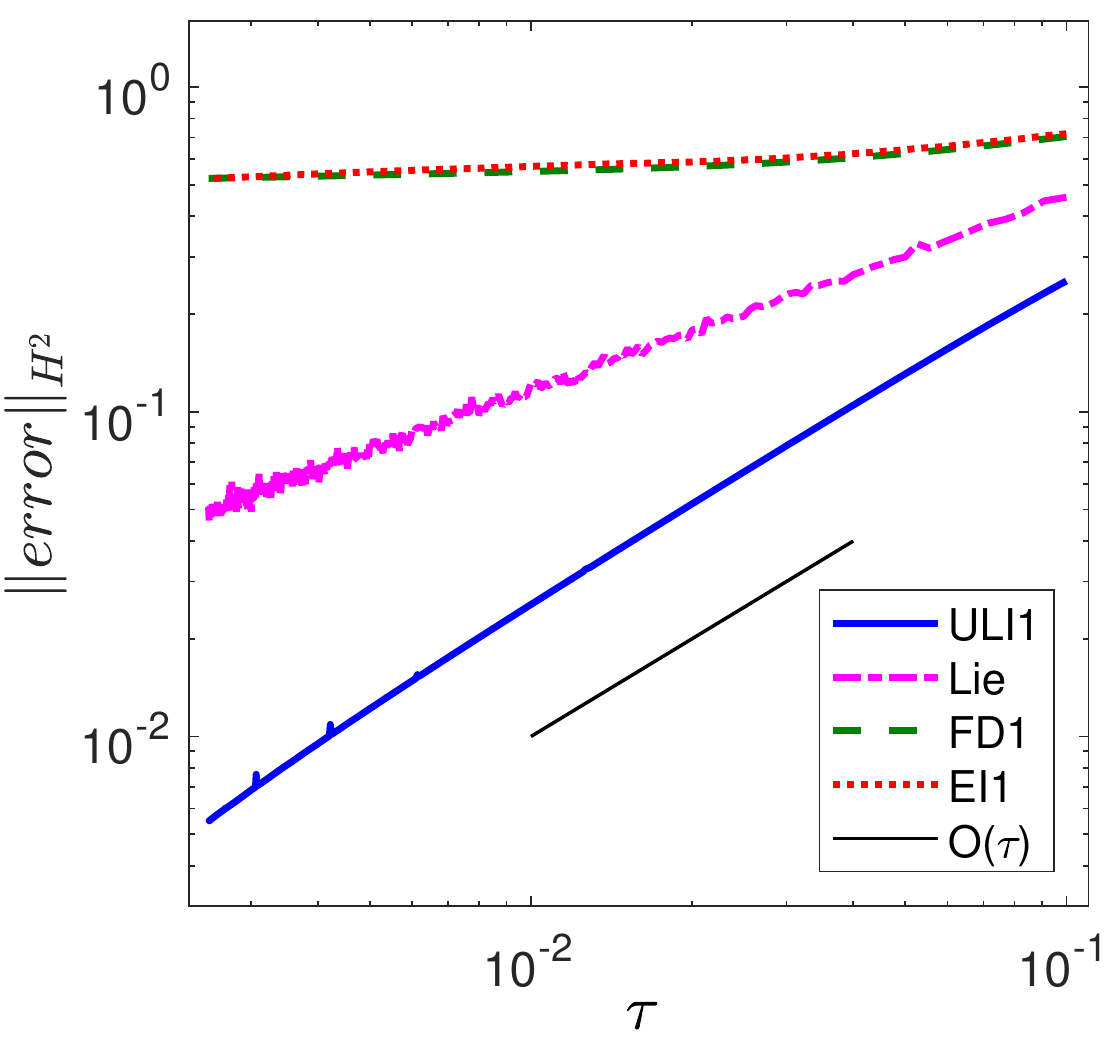,height=6cm,width=6.5cm}&
\psfig{figure=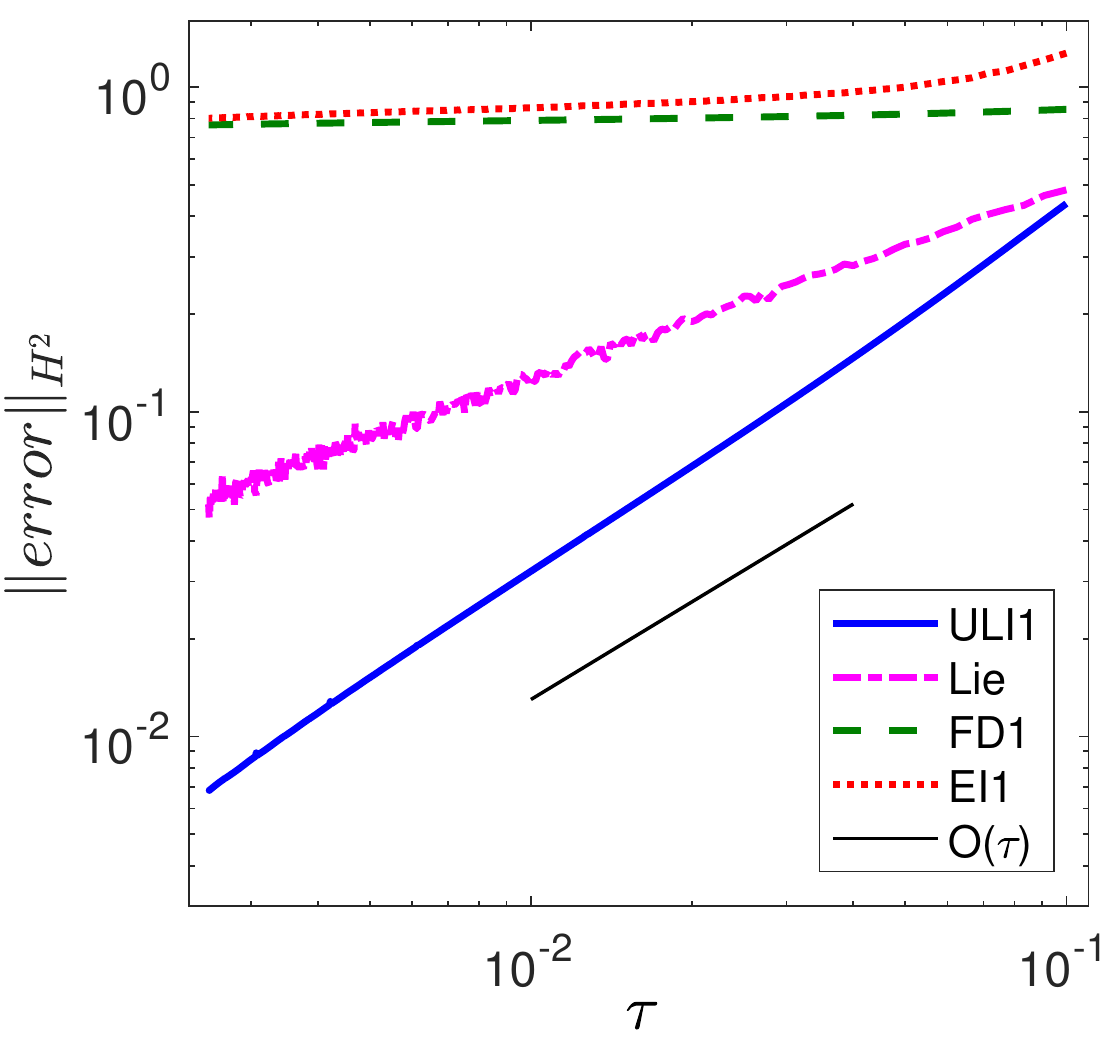,height=6cm,width=6.5cm}
\end{array}$$
\caption{Convergence of the first-order methods for NDE with external $V=V_e$ (left) and for Dirac-Poisson system (right) under  $H^{2.4}$-initial data:
$error=(\Phi(t_n)-\Phi^n)/\|\Phi(t_n)\|_{H^2}$ for $t_n=T=1$.}
\label{fig2}
\end{figure}

For the second-order methods, i.e., ULI2 (\ref{LI}), FD2 (\ref{SIFD2}), Strang splitting (\ref{TSFP2}) and EI2 (\ref{EI2}), their convergence results for solving the NDE (\ref{dirac trun}) with external $V=V_e$ under $\Phi_0(x)\in H^{2.2}(\bT)$ are presented in Figure \ref{fig3}, and the corresponding results for the Dirac-Poisson system (\ref{dirac-poisson}) are shown in Figure \ref{fig3} as well. Here, we measure the error in $H^1$-norm so that the chosen initial data fails the critical regularity requirement of the standard methods.

Last but not least, we would like to test the convergence of the ULI2 when the initial data fails the critical regularity requirement in Theorem \ref{main2}. We take $\Phi_0(x)\in H^{1.4}(\bT)$ and apply the second-order methods for solving the NDE (\ref{dirac trun}) with external $V=V_e$ or the Dirac-Poisson system (\ref{dirac-poisson}). The error measured in $H^1$-norm is shown in Figure \ref{fig4}.

\begin{figure}[t!]
$$\begin{array}{cc}
\psfig{figure=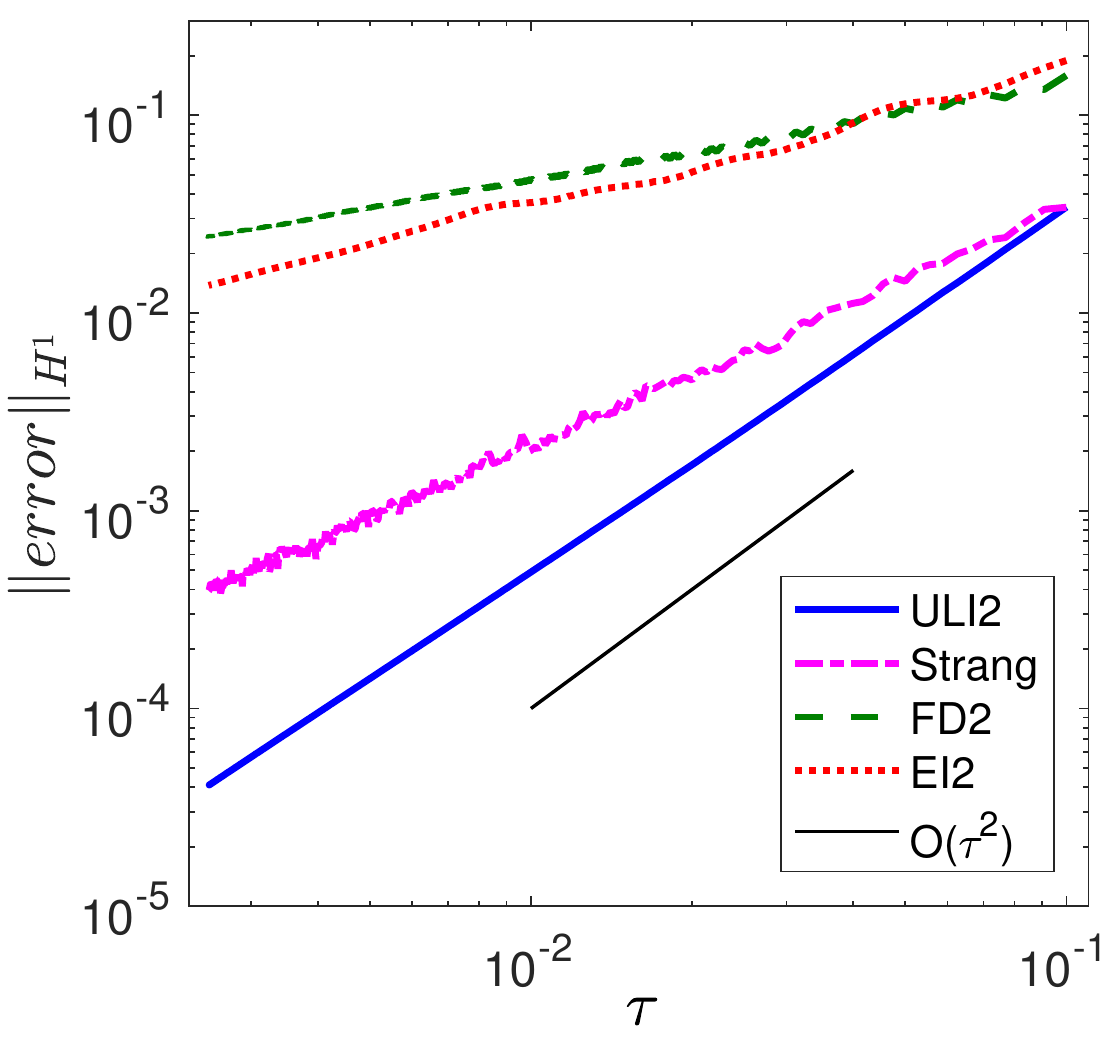,height=6cm,width=6.5cm}&
\psfig{figure=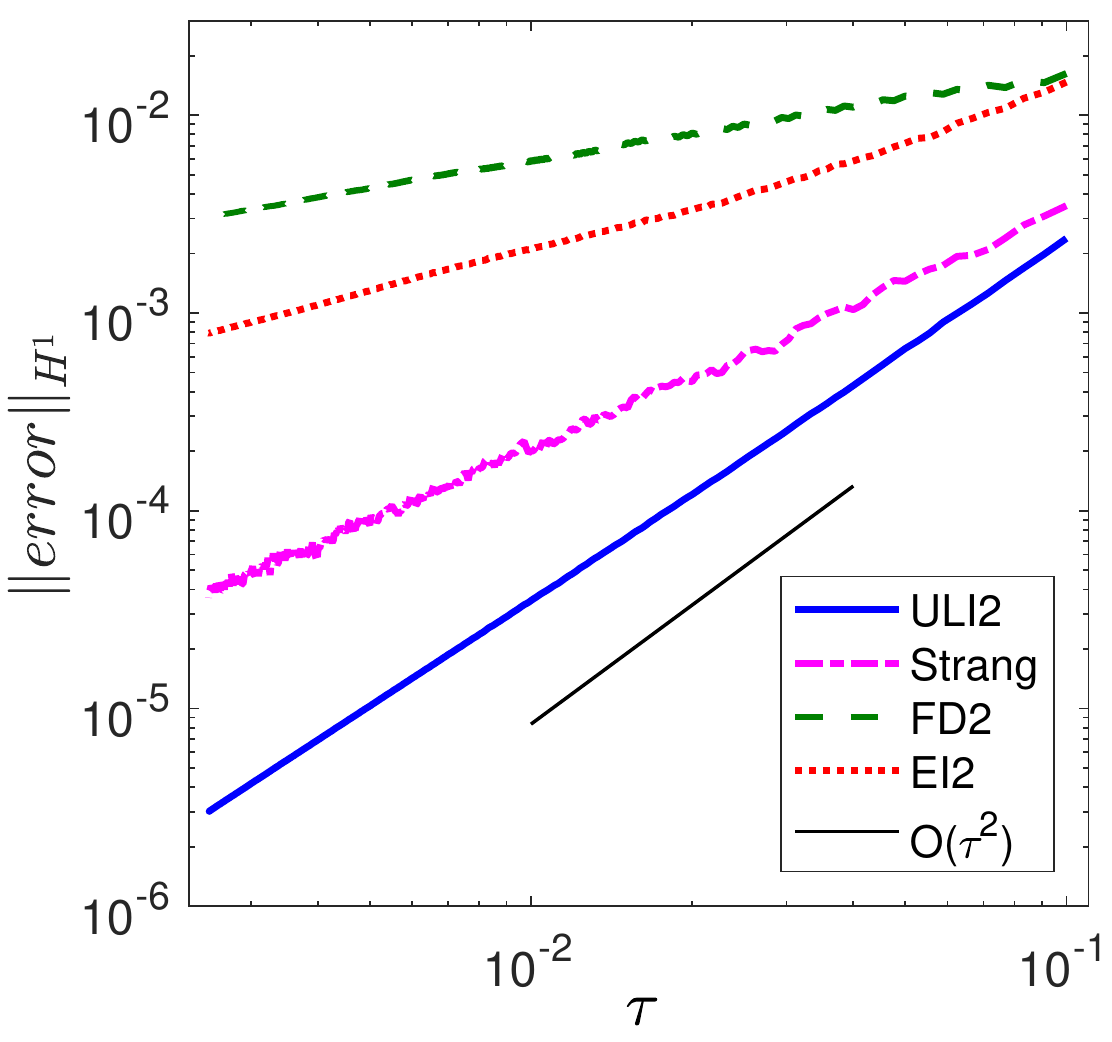,height=6cm,width=6.5cm}
\end{array}$$
\caption{Convergence of the second-order methods for NDE with external $V=V_e$ (left) and for Dirac-Poisson system (right) under $H^{2.2}$-initial data:
$error=(\Phi(t_n)-\Phi^n)/\|\Phi(t_n)\|_{H^1}$ for $t_n=T=1$.}
\label{fig3}
\end{figure}

\begin{figure}[t!]
$$\begin{array}{cc}
\psfig{figure=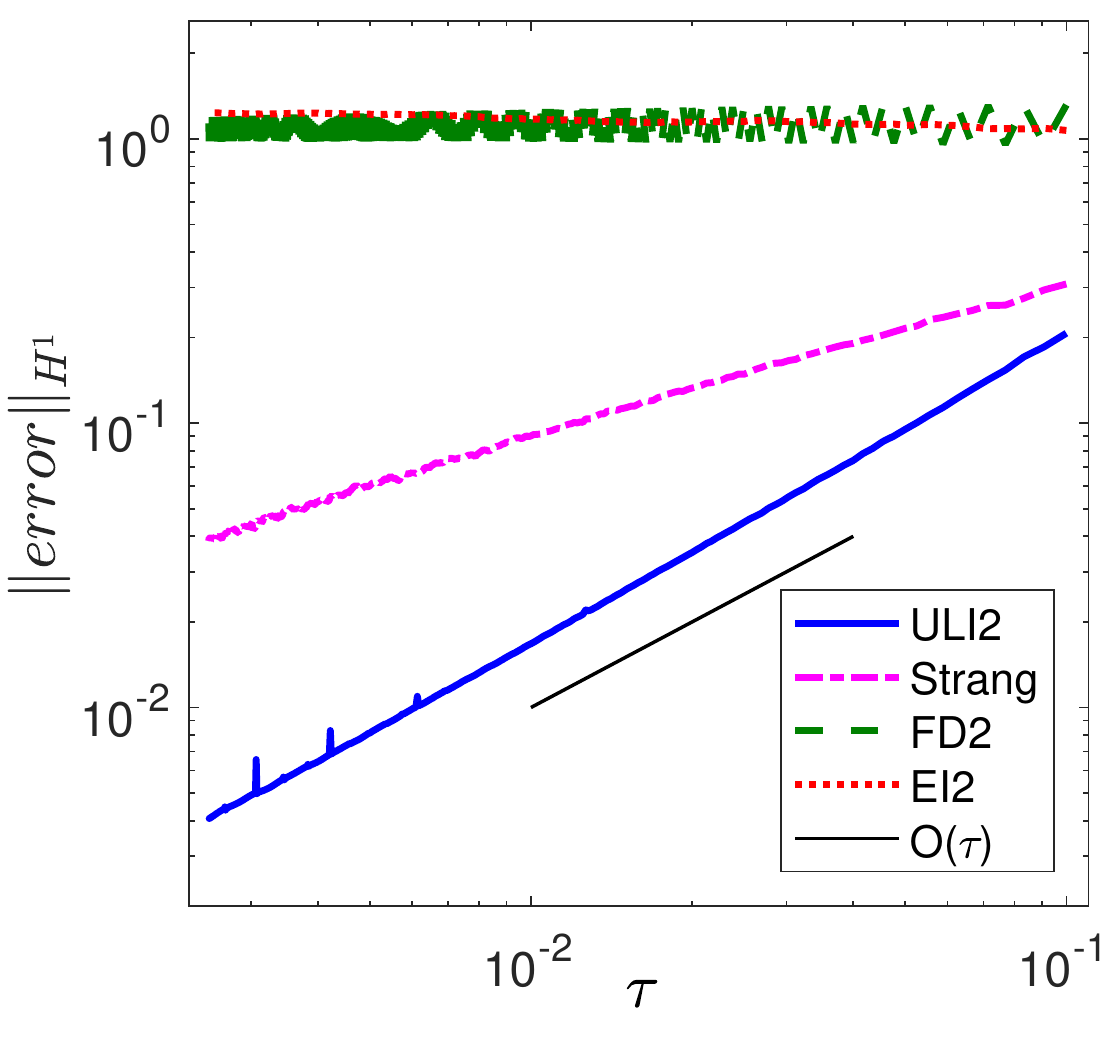,height=6cm,width=6.5cm}&
\psfig{figure=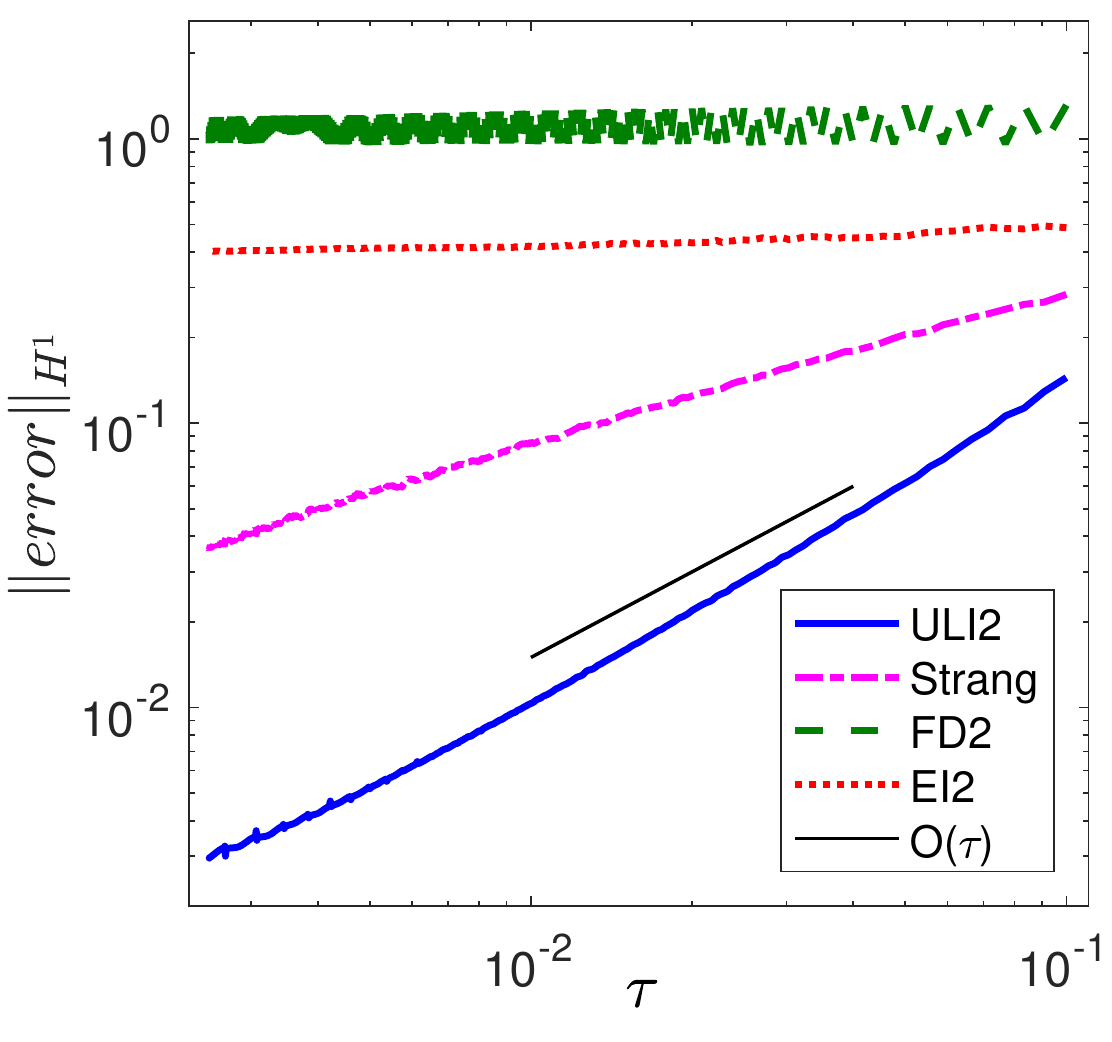,height=6cm,width=6.5cm}
\end{array}$$
\caption{Convergence of the second-order methods for NDE with external $V=V_e$ (left) and for Dirac-Poisson system (right) under $H^{1.4}$-initial data:
$error=(\Phi(t_n)-\Phi^n)/\|\Phi(t_n)\|_{H^1}$ for $t_n=T=1$.}
\label{fig4}
\end{figure}

From our numerical results (Figures \ref{fig2}-\ref{fig4}), we can draw the following conclusions:

1) All the tested methods are free from any CFL conditions (note that we used $\tau\gg \Delta x\approx 4\times 10^{-4}$). However, the standard numerical methods from Section \ref{sec:2} all suffer from significant order reduction for rough initial data. In particular, we numerically observe the favorable error behavior of our new schemes at low regularity, which underlines our theoretical convergence results.

2) The proposed ULI schemes reach their optimal convergence rates for rough initial data, which verifies our error estimates in Theorem \ref{main} and Theorem \ref{main2}. Therefore, the ULI methods are more accurate and efficient than the standard methods for solving the NDEs with less regular solutions.

3) ULI2 shows first-order convergence rate in $H^1$-norm for $H^{1+\delta}$-initial data with $0<\delta<1$. The latter does not meet the regularity requirement in Theorem \ref{main2}. Nevertheless,  ULI2 shows a much improved convergence rate compared to classical schemes.

\section{Conclusion}\label{sec:con}
In this paper, we consider numerical methods for integrating the nonlinear Dirac equation and the Dirac-Poisson system (NDEs) with low regular solutions. Due to the numerical loss of derivatives, standard methods such as finite difference methods, classical exponential integrators and splitting methods all suffer from   order reduction if the solution does not satisfy the critical regularity requirement. We propose a new class of ultra low-regularity integrators (ULI) for solving the NDEs. The great advantage of the new schemes is that they do not require any additional smoothness of the solution, i.e., ULI offers optimal first-order convergence rate in $H^r$ for solutions in $H^r$. Rigorous convergence results are established, and the extension of ULI to a  second-order scheme is established. Numerical experiments confirm our theoretical error estimates and underline the favorable error behavior of the new schemes  at low regularity.

\appendix
\section{Proof of convergence theorems for standard methods}\label{appendix}
Here we provide very briefly the proofs of the convergence theorems for the standard methods in Section \ref{sec:2} to emphasize the loss of derivatives in the classical approximations. We consider only the first-order convergence results under the external $V$ case for simplicity, and the second-order results can be proved by similar analysis, where the techniques are borrowed largely from the existing work \cite{SplittingDirac1,Lubich,lownls}. We denote in the following the error function as
$$e^n:=\Phi(t_n)-\Phi^n,\quad n\geq0. $$

\subsection{Proof of Theorem \ref{thm FD}}
\begin{proof}
  Let $\xi^n$ be the local truncation error defined as
  \begin{align}\label{xi fd}
  \xi^n=\frac{i}{\tau}\left[\Phi(t_{n+1})-\Phi(t_n)\right]+i\alpha\partial_x\Phi(t_{n+1})
  -\beta\Phi(t_n)-V\Phi(t_n)-F(\Phi(t_n)),\quad n\geq0.
  \end{align}
  By the Taylor expansion and the equation itself (\ref{dirac trun a}), we find
 \begin{align*}
  \xi^n=i\tau\int_0^1\partial_{tt}\Phi(t_n+\tau\sigma)(1-\sigma)d\sigma
  +i\tau\alpha\int_0^1\partial_t\partial_x\Phi(t_{n}+\tau\sigma)d\sigma,\quad n\geq0.
  \end{align*}
Then under the assumption
  $\partial_t\Phi\in L^\infty((0,T);H^{r+1})$ and $\partial_{tt}\Phi\in L^\infty((0,T);H^{r})$, we have
  $$\|\xi^n\|_r\leq C\tau\left(\|\partial_{tt}\Phi\|_{L^\infty((0,T);H^r)}+
  \|\partial_{t}\Phi\|_{L^\infty((0,T);H^{r+1})} \right)\leq C\tau.$$
  By taking the difference between (\ref{xi fd}) and the scheme (\ref{SIFD1}), we get
  $$\frac{i}{\tau}\left(e^{n+1}-e^n\right)=-i\alpha\partial_xe^{n+1}
  +\eta^n+\xi^n,\quad n\geq0,$$
  where $\eta^n:=\beta e^n+Ve^n+F(\Phi(t_n))-F(\Phi^n)$. Thus, we have
  $$e^{n+1}=A_\tau e^n
  -i\tau A_\tau(\eta^n+\xi^n),\quad\mbox{with}\quad A_\tau:=(id+\tau\alpha\partial_x)^{-1}
  =\frac{1}{1-\tau\partial_{xx}}\begin{pmatrix}
                                  1 &-\tau\partial_x\\
                                  -\tau\partial_x &1
                                \end{pmatrix}.$$
   For some general $\Psi=(\psi_1,\psi_2)^T\in H^r(\bT)$, by direct computing we see that the Fourier coefficients satisfy
   $$   |\widehat{(A_\tau\Psi)}_l|^2=|\widehat{\Psi}_l|^2\frac{1+\tau^2l^2}{(1+\tau^2l^2)^2},
   \quad l\in\bZ,$$
  so we have $\|A_\tau \Psi\|_r\leq \|\Psi\|_r.$
Then the rest of proof follows in the induction manner (see in the proof of Theorem \ref{main}) with the help of bilinear estimates.
\end{proof}

\subsection{Proof of Theorem \ref{thm EI}}
\begin{proof}
  The local truncation error of the EI1 scheme reads
  \begin{equation}
  \label{local EI}\xi^n=\Phi(t_{n+1})-\fe^{-i\tau\mathcal{T}}\Phi(t_n)
  +i\tau\varphi_1(-i\tau\mathcal{T})G(\Phi(t_n)),
  \quad n\geq0.
  \end{equation}
  By the Duhamel's formula (\ref{duhammel0}) and Taylor expansion, we find
  \begin{align*}
  \xi^n=-i\int_0^\tau\fe^{-i(\tau-s)\mathcal{T}}\int_0^1
  sG'(\Phi(t_n+s\sigma))\partial_t\Phi(t_n+s\sigma)d\sigma ds.
  \end{align*}
  So under the assumption in Theorem \ref{thm EI} and by the bilinear estimates, we get
  $$\|\xi^n\|_r\leq C\tau^2\|\partial_t\Phi\|_{L^\infty((0,T);H^{r})}\leq C\tau^2,
  \quad n\geq0.$$
  Then by taking the difference between (\ref{locaerror1}) and (\ref{EI1}), we get
  $$e^{n+1}=\fe^{-i\tau\mathcal{T}}e^n-
 i\tau\varphi_1(-i\tau\mathcal{T})\left[G(\Phi(t_n))-G(\Phi^n)\right]
 +\xi^n,\quad n\geq0.$$
 It is direct to verify that $\fe^{-i\tau\mathcal{T}}$ is isometric in $H^r(\bT)$, i.e.,  \begin{equation}\label{T isometric}
 \|\fe^{-i\tau\mathcal{T}}\Psi\|_r
 =\|\Psi\|_r,\quad \forall \Psi\in H^r(\bT),
 \end{equation}
 and
 $$\|\varphi_1(-i\tau\mathcal{T})\Psi\|_r\leq C\|\Psi\|_r.
$$
Hence, we have
\begin{align*}
 \|e^{n+1}\|_r\leq\|e^n\|_r+C\tau\|G(\Phi(t_n))-G(\Phi^n)\|_r+\|\xi^n\|_r,\quad n\geq0,
\end{align*}
and the rest of proof follows by the induction manner.
\end{proof}

\subsection{Proof of Theorem \ref{thm SP}}
\begin{proof}
Plugging (\ref{TSFP1a}) into (\ref{TSFP1b}),  the local truncation error is defined as
\begin{equation}\label{local TSFP}
\xi^n=\Phi(t_{n+1})-\fe^{-i\tau\mathcal{T}}\fe^{-i\tau
B^n}\Phi(t_n),\quad n\geq0,
\end{equation}
where we denote $B^n=V\cdot Id+\lambda(|\phi_1(t_n)|^2-|\phi_2(t_n)|^2)\beta$.
By Taylor expansion, we have
\begin{align*}
\fe^{-i\tau
B^n}\Phi(t_n)
=\Phi(t_n)-i\tau B^n\Phi(t_n)
+\xi^n_1\quad \mbox{with}\quad \xi_1^n=-\tau^2(B^n)^2\int_0^1\fe^{-is\tau
B^n}(1-s)ds\,\Phi(t_n),\end{align*}
and then
\begin{equation}\label{TSFP eq1}
\xi^n=\Phi(t_{n+1})-\fe^{-i\tau\mathcal{T}}
\Phi(t_n)+i\tau\fe^{-i\tau\mathcal{T}}B^n\Phi(t_n)
-\fe^{-i\tau\mathcal{T}}\xi^n_1.
\end{equation}
By iterating the Duhamel's formula (\ref{duhammel0}) once, i.e., the Picard iteration, we get
$$\Phi(t_{n+1})
=\fe^{-i\tau\mathcal{T}}\Phi(t_n)
 -i\int_0^\tau\fe^{-i(\tau-s)\mathcal{T}}G\left(
 \fe^{-is\mathcal{T}}\Phi(t_n)+\zeta_1^n(s)\right)\, ds,$$
 where
 $$\zeta_1^n(s):=-i\int_0^s\fe^{-i(s-\rho)\mathcal{T}}G\left(
 \Phi(t_{n}+\sigma)\right)d\sigma,\quad 0\leq s\leq\tau.$$
 Then by Taylor expansion
 \begin{align}\label{TSFP eq2}
  \Phi(t_{n+1})
=&\fe^{-i\tau\mathcal{T}}\Phi(t_n)
 -i\int_0^\tau\fe^{-i(\tau-s)\mathcal{T}}G\left(
 \fe^{-is\mathcal{T}}\Phi(t_n)\right)\, ds
 +\xi_2^n,
 \end{align}
where
$$\xi_2^n=-i\int_0^\tau\fe^{-i(\tau-s)\mathcal{T}}\int_0^1G'\left(
 \fe^{-is\mathcal{T}}\Phi(t_n)+\sigma\zeta_1^n(s)\right)\zeta_1^n(s)d\sigma ds.$$
 Plugging (\ref{TSFP eq2}) into (\ref{TSFP eq1}) and noting $G(\Phi(t_n))=B^n\Phi(t_n)$, we get
\begin{align*}
 \xi^n=&i\tau\fe^{-i\tau\mathcal{T}}G(\Phi(t_n))
 -i\int_0^\tau\fe^{-i(\tau-s)\mathcal{T}}
 G\left(\fe^{-is\mathcal{T}}\Phi(t_n)\right)\, ds
 -\fe^{-i\tau\mathcal{T}}\xi^n_1+\xi_2^n.
\end{align*}
By Taylor expansion, we have
\begin{align*}
\zeta_2^n(s):=& \fe^{is\mathcal{T}}
 G\left(\fe^{-is\mathcal{T}}\Phi(t_n)\right)-G(\Phi(t_n))\\
=&is\mathcal{T}\int_0^1\fe^{is\sigma\mathcal{T}}d\sigma\,
 G\left(\fe^{-is\mathcal{T}}\Phi(t_n)\right)
 -\int_0^1 G'\left(\Phi(t_n)
 -\sigma\zeta_3^n(s) \right)\zeta_3^n(s)d\sigma
\end{align*}
where
$$\zeta_3^n(s)=i\int_0^1 s\mathcal{T}\fe^{-is\rho\mathcal{T}}\Phi(t_n)d\rho.$$
Therefore,
\begin{align*}
 \xi^n=&-i\fe^{-i\tau\mathcal{T}}\int_0^\tau\zeta_2^n(s)ds
 -\fe^{-i\tau\mathcal{T}}\xi^n_1+\xi_2^n.
\end{align*}
Based on the assumption and the bilinear estimates, we find
\begin{align*}
&\|\xi_1^n\|_r\leq C\tau^2\|\Phi\|_{L^\infty((0,T);H^{r})}\leq C\tau^2,\\
&\|\xi_2^n\|_r\leq C\int_0^\tau\|\zeta_1^n(s)\|_rds\leq \tau^2\|\Phi\|_{L^\infty((0,T);H^{r})}\leq C\tau^2,\quad n\geq0,\\
 &\|\zeta_2^n(s)\|_r\leq sC\|\Phi\|_{L^\infty((0,T);H^{r+1})}
  +\int_0^1\|\zeta_3^n(s)\|_rds\leq sC\|\Phi\|_{L^\infty((0,T);H^{r+1})}\leq C\tau,\quad
  0\leq s\leq\tau.
\end{align*}
Hence, we have
\begin{align*}
 \|\xi^n\|_r&\leq\int_0^\tau\|\zeta_2^n(s)\|_rds+\|\xi_1^n\|_r+\|\xi_2^n\|_r \leq C\tau^2,\quad n\geq0.
\end{align*}
Taking the difference between (\ref{local TSFP}) and (\ref{TSFP1}), we get
$$e^{n+1}=\fe^{-i\tau\mathcal{T}}
\left[\fe^{-i\tau
B^n}\Phi(t_n)-\fe^{-i\tau(V\cdot Id+\lambda(|\phi_1^n|^2-|\phi_2^n|^2)\beta)}\Phi^n\right],
\quad n\geq0.$$
Then thanks to the fact (\ref{T isometric}), we have
$$\|e^{n+1}\|_r=\left\|\fe^{-i\tau\left(V\cdot Id+\lambda(|\phi_1(t_n)|^2-|\phi_2(t_n)|^2)\beta\right)}\Phi(t_n)
-\fe^{-i\tau\left(V\cdot Id+\lambda(|\phi_1^n|^2-|\phi_2^n|^2)\beta\right)}\Phi^n\right\|_r,\quad n\geq0,$$
and the rest of the proof proceeds similarly as that in \cite{Lubich} for stability and convergence.
\end{proof}

\section*{Acknowledgements}
Y. Wang is supported by the Fundamental Research Funds for the Central Universities CCNU19TD010. X. Zhao acknowledges the starting research grant from Wuhan University.

\bibliographystyle{model1-num-names}

\begin{thebibliography}{00}

\bibitem{1Dapp0}
{\sc A. Alvarez, B. Carreras}, Interaction dynamics for the solitary waves of a nonlinear Dirac model, Phys. Lett., 86 (1981) pp. 327-332.

\bibitem{JSC}
{\sc W. Bao, Y. Cai, X. Jia, Q. Tang}, Numerical methods and comparison for the Dirac equation in the nonrelativistic limit regime, J. Sci. Comput., 71 (2017), pp. 1094-1134.

\bibitem{SplittingDirac1} {\sc W. Bao, Y. Cai, X. Jia, J. Yin}, Error estimates of numerical methods for the nonlinear Dirac equation in the nonrelativistic limit regime, Sci. China Math., 59 (2016) pp. 1461-1494.

\bibitem{DMBao}{ \sc W. Bao, X. Li}, An efficient and stable numerical method for the Maxwell-Dirac system, J. Comp. Phys., 199 (2004), pp. 663-687.


\bibitem{lowDKG0}{\sc R. Beals, M. B\'{e}zard}, Low regularity local solutions for field equations, Comm. Partial Differential Equations, 21 (1996), pp. 79-124.

\bibitem{Mauser}
{\sc P. Bechouche, N. Mauser, F. Poupaud}, (Semi)-nonrelativistic limits of the Dirac eqaution with external time-
dependent electromagnetic field, Commun. Math. Phys., 197 (1998), pp. 405-425.

\bibitem{lowdirac0} {\sc N. Bournaveas}, Local well-posedness for a nonlinear Dirac equation in spaces of almost critical dimension, Discrete Contin. Dyn. Syst., 20 (2008), pp. 605-616.

\bibitem{lowdirachd} {\sc N. Bournaveas}, Local and global solutions for a nonlinear Dirac system, Adv. Differential Equations, 9 (2004), pp. 677-698.

\bibitem{lowDKG1}
{\sc N. Bournaveas, D. Gibbeson}, Low regularity global solutions of the Dirac-Klein-Gordon
equations in one space dimension, Differential Integral Equations, 19 (2006),  pp. 211-222.

\bibitem{lowCSD}
{\sc N. Bournaveas, T. Candy, S. Machihara}, Local and global well-posedness for
the Chern-Simons-Dirac system in one dimension, Differential Integral Equations, 25 (2012), pp. 699-718.


\bibitem{FDDirac}
{\sc D. Brinkman, C. Heitzinger, P.A. Markowich}, A convergent 2D finite-difference scheme for the Dirac-Poisson system and the simulation of graphene, J. Comput. Phys., 257 (2014), pp. 318-332.

\bibitem{DiracYan}
{\sc Y. Cai, Y. Wang}, A uniformly accurate (UA) multiscale time integrator pseudospectral method for the nonlinear Dirac equation in the nonrelativistic limit regime, Esaim Math. Model. Numer. Anal., 52 (2018), pp. 543-566.

\bibitem{DiracYanSINUM}
{\sc Y. Cai, Y. Wang}, Uniformly accurate nested Picard iterative integrators for the Dirac equation in the nonrelativistic limit regime, to appear on SIAM J. Numer. Anal. 2019.


\bibitem{lowdiracbook}{\sc F. Cacciafesta}, Dispersive properties of the Dirac equation: Strichartz estimates and the nonlinear problem, LAP LAMBERT Academic Publishing, 2012.

\bibitem{lowdirac1} {\sc T. Candy}, Global existence for an $L^2$ critical nonlinear Dirac equation in one dimension, Adv. Differential Equations, 16 (2011), pp. 643-666.

 \bibitem{lowDKG2}
 {\sc P. D'Ancona, D. Foschi, S. Selberg},   Null structure and almost optimal local regularity for the Dirac-Klein-Gordon system, J. Eur. Math. Soc., 9 (2007), pp. 877-899.

\bibitem{lowDKG3}
{\sc P. D'Ancona, D. Foschi, S. Selberg}, Local well-posedness below the charge
norm for the Dirac-Klein-Gordon system in two space dimensions, J. Hyperbolic Differ. Equ., 4 (2007), pp. 295-330.

\bibitem{lowDKG3}
{\sc P. D'Ancona, D. Foschi, S. Selberg}, Null structure and almost optimal local well-posedness of the Maxwell-Dirac system, Amer. J. Math., 132 (2010), pp. 771-839.

\bibitem{H1result}
{\sc V. Delgado}, Global solution of the Cauchy problem for the (classical) coupled Maxwell-Dirac and other nonlinear Dirac equations in one space dimension,  Proc. Amer. Math. Soc., 69 (1978), pp. 289-206.

\bibitem{lowdirac3d}
{\sc M. Escobedo, L. Vega}, A semilinear Dirac equation in $H^s(R^3)$ for $s > 1$, SIAM J. Math. Anal., 28 (1997), pp. 338-362.

\bibitem{lowDKG4}{\sc Y. Fang}, Low regularity solutions for Dirac-Klein-Gordon equations in one space dimension, Electr. J. Diff. Equations, 102 (2001) pp. 1-19.

\bibitem{Faou}{\sc E. Faou}, Geometric Numerical Integration and Schr\"{o}dinger Equations, European Math. Soc. Publishing House, Z\"{u}rich 2012.

\bibitem{2dmaterial} {\sc C.L. Fefferman, M.I. Weistein}, Honeycomb lattice potentials and Dirac points, J. Amer. Math. Soc., 25 (2012) pp. 1169-1220.

\bibitem{SplittingDirac2}{\sc J.D. Frutos, J.M. Sanz-Serna},  Split-Step Spectral schemes for nonlinear Dirac systems,   J. Comp. Phys., 83 (1989) pp. 407-423.

    \bibitem{Gautschi}
{\sc W. Gautschi}, Numerical integration of ordinary differential equations based on trigonometric polynomials, Numer. Math., 3 (1961), pp. 381-397.

\bibitem{BEC}  {\sc L.H. Haddad, L.D. Carr}, The nonlinear Dirac equation in Bose-Einstein condensates: Foundation and symmetries. Phys. D, 238 (2009) pp. 1413-1421.

\bibitem{Hairer}
{\sc E. Hairer, Ch. Lubich, G. Wanner}, Geometric Numerical Integration: Structure-Preserving Algorithms for Ordinary Differential Equations, Springer, Berlin, 2006.

\bibitem{FDadd2}{\sc R. Hammer, W. P\"{o}tz, A. Arnold},
Single-cone real-space finite difference scheme for the time-dependent Dirac equation
J. Comput. Phys., 265 (2014), pp. 50-70.

\bibitem{1Dapp1}
{\sc H. Hasegawa}, Bound states of the one-dimensional Dirac equation for scalar and vector double square-well potentials, Physica E 59 (2014) pp. 192-201.

\bibitem{Hochbruck}{\sc M. Hochbruck, A. Ostermann}, Exponential integrators, Acta Numer., 19 (2010) pp. 209-286.

\bibitem{Kdv}   {\sc M. Hofmanov\'{a}, K. Schratz}, An exponential-type integrator for the KdV equation,
Numer. Math., 136 (2017) pp. 1117-1137.

\bibitem{DiracMaxwell}{\sc Z. Huang, S. Jin, P. A. Markowich, C. Sparber, C. Zheng}, A time-splitting spectral scheme for the Maxwel-Dirac system,  J. Comp. Phys., 208 (2005)  pp. 761-789.


 \bibitem{lowDKG5}{\sc  H. Huh, B. Moon}, Low regularity well-posedness for Gross-Neveu equations, Commun. Pure Appl. Anal., 14 (2015) pp. 1903-1913.

\bibitem{lownls3} {\sc M. Kn\"{o}ller, A. Ostermann, K. Schratz},
A Fourier integrator for the cubic nonlinear Schr\"{o}dinger equation with rough initial data, preprint 2019.

\bibitem{DiracZhao}{\sc M. Lemou, F. M\'{e}hats, X. Zhao}, Uniformly accurate numerical schemes for the nonlinear Dirac equation in the nonrelativistic limit regime,
  Commun. Math. Sci., 15 (2017) pp. 1107-1128.

\bibitem{Lubich}
{\sc Ch. Lubich}, On  splitting  methods for Schr\"{o}dinger-Poisson  and  cubic  nonlinear
Schr\"{o}dinger equations,  Math. Comp., 77 (2008), pp. 2141-2153.

\bibitem{Wellpose0}
{\sc S. Machiharaa, M. Nakamurab, K. Nakanishi, T. Ozawa}, Endpoint Strichartz estimates and global solutions for the nonlinear Dirac equation,  J. Funct. Anal., 219 (2005) pp. 1-20.

\bibitem{Machihara} {\sc S. Machihara}, One dimensional Dirac equation with quadratic nonlinearities. Discrete Contin. Dyn. Syst., 13 (2005), pp. 277-290.

    \bibitem{Splitting}
{\sc R.I. McLachlan, G.R.W. Quispel}, Splitting methods, Acta Numer., 11 (2002) pp. 341-434.

 \bibitem{Splitadd1}
{\sc G.R. Mocken, C.H. Keitel}, FFT-split-operator code for solving the Dirac equation in $2+1$ dimensions, Comput. Phys. Commun., 178 (2008), pp. 868-882.

\bibitem{lownls} {\sc A. Ostermann, K. Schratz},
{Low regularity exponential-type integrators for semilinear Schr\"{o}dinger equations},
 Found. Comput. Math., 18 (2018) pp. 731-755.


\bibitem{lowNLS2} {\sc A. Ostermann, F. Rousset, K. Schratz},
Error estimates of a Fourier integrator for the cubic Schr\"{o}dinger equation at low regularity, preprint, 2019.

 \bibitem{lowDKG6}
{\sc H. Pecher}, Low regularity well-posedness for the one-dimensional Dirac-Klein-Gordon system, Electron. J. Diff. Eqns., 150 (2006), pp. 1-13.

\bibitem{lowdirac2D}
{\sc H. Pecher}, Local well-posedness for the nonlinear Dirac equation in two space dimensions, Commun. Pure Appl. Anal., 13 (2014) pp. 673-685.

\bibitem{FDadd1}
{\sc W. P\"{o}tz}, Single-cone finite-difference schemes for the $(2+1)$-dimensional Dirac equation in general electromagnetic textures, Phys. Rev. E, 96 (2017) pp. 053312.

\bibitem{Thirring}
{\sc W.E. Thirring}, A soluble relativistic field theory, Ann. Physics, 3 (1958), pp. 91-112.

 \bibitem{lowDKG7}{\sc S. Selberg, A. Tesfahun},
Low regularity well-posedness of the Dirac-Klein-Gordon equations in one space dimension,
 Commun. Contemp. Math., 10 (2008) pp. 181-194.

 \bibitem{lowdirac2}{\sc S. Selberg, A. Tesfahun},
Low regularity well-posedness for some nonlinear Dirac equations in one space dimension,
Differential Integral Equations, 23 (2010) pp. 265-278.

\bibitem{RKdirac}{\sc S. Shao, H. Tang}, Higher-order accurate Runge-Kutta discontinuous Galerkin methods for a nonlinear Dirac model, Discrete Cont. Dyn. Syst. B, 6 (2006), pp. 623-640.

\bibitem{ST}
{\sc J. Shen, T. Tang, L. Wang}, Spectral Methods: Algorithms, Analysis and Applications, Springer, 2011.

\bibitem{QFT}{\sc M. Soler}, Classical, stable, nonlinear spinor field with positive rest energy. Phys. Rev. D, 1 (1970)  pp. 2766-2769.

\bibitem{Tao}{\sc T. Tao}, Nonlinear Dispersive Equations. Local and Global Analysis. Amer. Math. Soc. Providence 2006.

 \bibitem{KGD}{\sc W. Yi, Y. Cai}, Optimal error estimates of finite difference time domain methods for the Klein-Gordon-Dirac system, https://doi.org/10.1093/imanum/dry084, IMA J. Numer. Anal., to appear, 2019.

\bibitem{EIadd1}  {\sc W. Yi, X. Ruan, C. Su},   Optimal resolution methods for the Klein-Gordon-Dirac system in the nonrelativistic limit regime,  J. Sci. Comput., 79 (2019) pp. 1907-1935.

\bibitem{Shao}{\sc J. Xu, S. Shao, H. Tang}, Numerical methods for nonlinear Dirac equation, J. Comput. Phys., 245 (2013) pp. 131-149.
\end{thebibliography}

\end{document}